\definecolor{colorpink}{RGB}{251,53,155}
\definecolor{colorblue}{RGB}{0,148,200}
\definecolor{colorgreen}{RGB}{0,150,0}
\def\W{\mathbf{W}}
\def\I{\mathbf{I}}
\def\P{\mathbf{P}}
\def\X{\mathbf{X}}
\def\Y{\mathbf{Y}}
\def\A{\mathbf{A}}
\def\B{\mathbf{B}}
\def\St{\text{s.t.}}
\def\R{\mathbf{R}}
\def\0{\mathbf{0}}
\def\1{\mathbf{1}}
\def\b{\mathbf{b}}
\def\c{\mathbf{c}}
\def\x{\mathbf{x}}
\def\\x{\overline{\mathbf{x}}}
\def\y{\mathbf{y}}
\def\\y{\overline{\mathbf{y}}}
\def\z{\mathbf{z}}
\def\sgn{\text{sgn}}
\newtheorem{theorem}{Theorem}
\newtheorem{definition}{Definition}
\newtheorem{lemma}{Lemma}
\newtheorem{cor}{Corollary}
\title{Fast Large-Scale Discrete Optimization Based on Principal Coordinate Descent}
\author{
  Huan Xiong
   \And
   Mengyang Yu
   \And
   Li Liu
  \And
   Fan Zhu
  \And
   Fumin Shen
  \And
   Ling Shao
  % Coauthor \\
  % Affiliation \\
  % Address \\
  % \texttt{email} \\
  % \AND
  % Coauthor \\
  % Affiliation \\
  % Address \\
  % \texttt{email} \\
  % \And
  % Coauthor \\
  % Affiliation \\
  % Address \\
  % \texttt{email} \\
  % \And
  % Coauthor \\
  % Affiliation \\
  % Address \\
  % \texttt{email} \\
}
\begin{document}
% \nipsfinalcopy is no longer used

\maketitle

%\vspace{-2ex}
\begin{abstract}
Binary optimization, a representative subclass of discrete optimization, plays an important role in mathematical optimization and has various applications in computer vision and machine learning. Usually, binary optimization problems are NP-hard and difficult to solve due to the binary constraints, especially when the number of variables is very large. Existing methods often suffer from high computational costs or large accumulated quantization errors, or are only designed for specific tasks. In this paper, we propose a fast algorithm to find effective approximate solutions for general binary optimization problems. The proposed algorithm iteratively solves minimization problems related to the linear surrogates of loss functions, which leads to the updating of some binary variables most impacting the value of loss functions in each step. Our method supports a wide class of empirical objective functions with/without restrictions on the numbers of $1$s and $-1$s in the binary variables. Furthermore, the theoretical convergence of our algorithm is proven, and the explicit convergence rates are derived, for objective functions with Lipschitz continuous gradients, which are commonly adopted in practice. Extensive experiments on several binary optimization tasks and large-scale datasets demonstrate the superiority of the proposed algorithm over several state-of-the-art methods in terms of both effectiveness and efficiency.
\end{abstract}

\section{Introduction}
Binary optimization problems are generally formulated as follows:
\begin{align}\label{Eq:Problem0}
  \min_{\mathbf{x}}~  f(\mathbf{x}), \,\,\,\,\,\,\,\,
  \St~~  \mathbf{x}\in    \{ \pm 1\}^{n}.
\end{align}
Problem \eqref{Eq:Problem0} appears naturally in several fields of computer vision and machine learning, including clustering \cite{wang2011information},  graph bisection \cite{wang2017large,yuan2016binary}, image denoising \cite{bi2014exact}, dense subgraph discovery \cite{ames2015guaranteed,balalau2015finding,yuan2016binary,yuan2013truncated}, multi-target tracking \cite{shi2013multi}, and community discovery \cite{he2016joint}. In many application scenarios, such as binary hashing  \cite{gui2018fast,liu2014discrete,shen2015supervised,shen2018unsupervised,wang2018survey,weiss2009spectral,xiong2021generalized}, Problem \eqref{Eq:Problem0} needs to be solved for millions of binary variables,  which makes the size $2^n$ of the feasible set very large (far larger than the number of atoms in the universe). Usually, it is difficult to find the optimal solution. Therefore, providing a fast algorithm to approximately solve Problem \eqref{Eq:Problem0} is very important in practice.

Furthermore, additional constraints on the numbers of $1$s and $-1$s in the binary variables $\x$ are adopted in many cases. For example, in binary hashing \cite{shen2015supervised,shen2018unsupervised} and graph bisection \cite{wang2017large,yuan2016binary}, the balance condition is often required, which means that the numbers of $1$s and $-1$s are equal to each other. On the other hand, dense subgraph discovery  \cite{ames2015guaranteed,balalau2015finding,yuan2016binary,yuan2013truncated} and information theoretic clustering \cite{wang2011information} require that the numbers of $1$s and $-1$s in $\x$ are some fixed integers.

%\textbf{TODO: explain $\Omega_i$, balance condition in hashing, graph bisection, dense subgraph discovery, information theoretic clustering (ITC)  \cite{wang2011information}.}

To handle these previously mentioned constraints, in this paper, we focus on the following binary optimization problem:
\begin{align}\label{Eq:Problem1}
  \min_{\mathbf{x}}~  f(\mathbf{x}), \,\,\,\,\,\,\,\,
  \St~~  \mathbf{x}\in    \sl{\Omega_r},
\end{align}
where
$\x$ is a binary vector of length $n$, %$n$ is a given positive integer,
$f(\cdot)$ is a differentiable objective function (which may be nonconvex), $r\geq -1$ is a given integer, and the restriction $\Omega_r$ on $\x$ is defined as
\begin{align}
\Omega_{r}=
    \begin{cases}
    \{ \pm 1\}^{n},  \ \ \ \,\text{if}\,\,\, r=-1;
    \\
    \{ \x\in \{ \pm 1\}^{n}: \1^\top\x=2r-n\},\ \ \ \,\text{if}\,\,\, r\in\mathbb{N}_{\geq 0},
    \end{cases}
\end{align}
where $\mathbb{N}_{\geq 0}$ denote the set of nonnegative integers.
When $r=-1$, Problem \eqref{Eq:Problem1} is a binary optimization problem without further constraints. When~$r\in\mathbb{N}_{\geq 0}$, Problem \eqref{Eq:Problem1} becomes an optimization problem with the restriction that there are exactly $r$ $1$s in the binary vector $\x$. For instance, when $r=n/2$, the constraint $\1^\top\x=2r-n=0$ implies that the number of $1$s is equal to the number of $-1$s in $\x$.

%Binary optimization problem \eqref{Eq:Problem1} has many applications in both machine learning and computer vision,

%One popular application of Problem \eqref{Eq:Problem1} is the image binary hashing \cite{gui2018fast,shen2018unsupervised,wang2018survey,shen2015supervised,liu2014discrete,weiss2009spectral},   which aims to achieve efficient similarity search by encoding high-dimensional feature vectors, such as images, to short binary hash codes, while preserving the similarities between the original image and binary codes.

In general, Problem \eqref{Eq:Problem1} is NP-hard due to the binary constraints \cite{johnson1979computers}. Many algorithms, such as continuous relaxation, equivalent optimization, signed gradient optimization and direct discrete optimization, have been proposed to solve it approximately (for details, please refer to Section \ref{sec:related}). However, they usually suffer from high computational costs or large accumulated quantization errors, or are only designed for specific tasks.

To overcome these difficulties, in this paper, we propose a novel and fast  optimization algorithm, termed \textit{Discrete Principal Coordinate Descent (DPCD)}, to approximately solve Problem \eqref{Eq:Problem1}. %Since optimizing binary variables involves bit flipping (changing between $+1$ and $-1$),
%The time complexity for the binary optimization problem is relatively high when directly applying signed gradient methods (updating all variables at each time based on the gradients). On the contrary,
The time complexity for the binary optimization problem is relatively high when directly applying signed gradient methods (updating all variables at each time based on the gradients). On the contrary, the proposed DPCD focuses on the principal coordinates most impacting the value of the loss function. At each iteration, DPCD can adaptively decide the number of principal coordinates that need to be optimized, which can be regarded as analogous to the adaptive learning rate in the normal gradient descent methods. Different from other binary optimization algorithms in the literature, our DPCD method supports a large family of empirical objective functions with/without restrictions on the numbers of $1$s and $-1$s in the binary variables.
Furthermore, we prove theoretical convergence of DPCD for loss functions with Lipschitz continuous gradients, which cover almost every loss function in practice. Explicit convergence rates are also derived. Extensive experiments on two binary optimization tasks: dense subgraph discovery and  binary hashing, show the superiority of our method over state-of-the-art methods in terms of both efficiency and effectiveness.

%(i) Different from other binary optimization algorithms in the literature, our DPCD method supports a large family of empirical objective functions with/without the restrictions on the numbers of $1$ and $-1$ in the binary variables.

%(ii) Each optimization iteration of the proposed method is based on the calculation of gradients of the loss function, which is quite simple and can be done very fast. Therefore it is suitable for tackling large-scale datasets.

%(iii) We provide theoretically convergence results for loss functions with $L$-Lipschitz continuous gradients, which are satisfied by almost every loss function we meet in practice. The explicit convergence rates are also derived.

%(iv) Extensive experiments on several binary optimization tasks, such as image hashing, image segmentation, and dense subgraph discovery, have shown the superiority of our method over several state-of-the-art methods in terms of both efficiency and efficacy (see Section \ref{sec:experiments}). %\textbf{For the purpose of reproducibility, we provide our MATLAB code in the Supplementary Material.}

\section{Related work} \label{sec:related}
 A very rich literature and a wide range of promising methods exist in binary optimization. We briefly review three classes of representative and related methods.

\paragraph{Continuous relaxation methods.} An intuitive method to approximately solve Problem \eqref{Eq:Problem0} is to relax the binary constraints to continuous variables, then threshold the continuous solutions to binary vectors. For instance, in the Linear Programming (LP) relaxation \cite{hsieh2015pu,komodakis2007approximate}, the binary constraint is substituted with the box constraint, i.e., $\x\in [-1,1]^n$, which can be approximately solved by continuous optimization methods such as the interior-point method \cite{mehrotra1992implementation}. On the other hand, the Semi-Definite Programming (SDP) relaxation \cite{wang2017large} replaces the binary constraint with some positive semi-definite matrix constraint. In Spectral relaxation \cite{lin2013general,olsson2007solving}, the binary constraint is relaxed to some $\ell_2$-ball, which is non-convex. One of the advantages of such continuous relaxation methods is that the relaxed problems can be approximately solved efficiently by existing continuous optimization solvers. However, the relaxation is usually too loose, and the thresholding often yields large quantization errors.

\paragraph{Equivalent optimization methods.} Unlike relaxation methods, equivalent optimization methods replace the binary constraint with some equivalent forms, which are much easier to handle. For example, motivated by linear and spectral relaxations, Wu and Ghanem \cite{wu2018lp} replaced the binary constraint with the intersection of the box $[-1,1]^n$ and the sphere $ \{ \x:\|\x \|_2^2 = n \} $, and then applied the Alternating Direction Method of Multipliers (ADMM) \cite{boyd2011distributed,li2015global,wang2015global} to solve the optimization problem iteratively. Other methods in this direction include the MPEC-ADM and MPEC-EPM methods (\cite{yuan2016binary, yuan2017exact}),  the $\ell_0$ norm reformulation \cite{lu2013sparse,yuan2016sparsity}, the $\ell_2$ box non-separable reformulation \cite{murray2010algorithm}, and the piecewise separable reformulation \cite{zhang2007binary}. Usually, these equivalent optimization methods guarantee the convergence to some stationary and feasible points, but the convergence speed is often too slow, resulting in high computational costs for  large-scale optimization problems.

 \paragraph{Signed gradient methods.} In the Signed Gradient Method (SGM) \cite{liu2014discrete}, a linear surrogate of the objective function $f(\x)$ is given at each iteration. Then, the minimization (actually a maximization problem was studied in the original paper \cite{liu2014discrete}, we state an equivalent form here) of this surrogate function gives the updating rule for Problem \eqref{Eq:Problem0} as: $ \x^{k+1}=-{\sgn}(\nabla f(\x^k)) $. The sequence obtained by this updating rule is guaranteed to converge if the objective function is concave. However, even for a very simple non-concave function, SGM may generate a divergent sequence and never converge (please refer to Lemma \ref{prop:example}). Furthermore, SGM cannot handle binary problems with restriction on the number of $1$s  since  the number of $1$s may change during each iteration.  A stochastic version of this method was given in Adaptive Discrete Minimization (ADM) \cite{liu2017discretely}, in which an adaptive ratio $\psi$ was selected at each iteration, then some random $\psi n$ entries of $\x$ were updated by $ \x^{k+1}_i=-{\sgn}(\nabla_i f(\x^k)) $.  Although ADM works well for certain loss functions, it fails when the value of the loss function depends largely on only a few variables, since the random selecting procedure may skip such important variables.

\paragraph{Discrete optimization methods.}
In the field of image hashing, many direct discrete optimization methods, such as DCC  \cite{shen2015supervised},  SADH \cite{shen2018unsupervised}, ARE \cite{hu2018hashing}, ITQ \cite{gong2013iterative} and FastHash \cite{lin2014fast}, have been proposed, which aim to optimize binary variables directly (SGM  can also be seen as a discrete optimization method).  For example, the Coordinate Descent (CD) method \cite{wright2015coordinate} is widely used for solving optimization problems with smooth and convex constraints. Motivated by this method, several  discrete cyclic coordinate descent (DCC) methods (e.g., RDCM \cite{luo2018robust}, FSDH \cite{gui2018fast}, and SDH  \cite{shen2015supervised}) have been proposed to handle the binary constraint directly. The main idea is that, at each iteration, a subproblem with most entries of the binary variables fixed is considered, and the loss function is minimized with respect to the remaining entries.
Although such methods can work well for specific loss functions, most of them are usually difficult to be extended to handle general binary optimization problems. Furthermore, they often suffer from expensive computational costs.

%\paragraph{Directly binary optimization methods.} In the Signed Gradient Method (SGM) \cite{liu2014discrete}, a linear surrogate of the objective function $f(\x)$ is given at each iteration. Then, the minimization (actually a maximization problem was studied in the original paper \cite{liu2014discrete}, we state an equivalent form here) of this surrogate function gives the updating rule for Problem \eqref{Eq:Problem0} as: $ \x^{k+1}=-{\sgn}(\nabla f(\x^k)) $. The sequence obtained by this updating rule is guaranteed to converge if the objective function is concave. However, even for a very simple non-concave function, SGM may generate a divergent sequence and never converge (please refer to Lemma \ref{prop:example}).  A stochastic version of this method was given in Adaptive Discrete Minimization (ADM) \cite{liu2017discretely}, in which an adaptive ratio $\psi$ was selected at each iteration, then some random $\psi n$ entries of $\x$ were updated by $ \x^{k+1}_i=-{\sgn}(\nabla_i f(\x^k)) $.  Although ADM works well for certain loss functions, it fails when the value of the loss function depends largely on only a few variables, since the random selecting procedure may skip such important variables.

\section{Proposed algorithm}
In this section, we present in detail the DPCD algorithm for solving Problem \eqref{Eq:Problem1}, which is a general binary optimization problem with/without  restrictions on the numbers of $1$s and $-1$s. We also provide a theoretical convergence analysis of the proposed method.

\subsection{Notation and preliminaries}\label{sec:notations}
We first introduce some notation and preliminaries. A vector is represented by some lowercase bold character, while a matrix is represented by some uppercase bold character. Let $\x_i$ and $\A_{ij}$ denote the $i$-th and $(i,j)$-th entries of a vector $\x$ and a matrix $\A$, respectively. The transpose of a matrix $\A$ is represented by $\A^\top$.
%The $i$-th entry of a vector $\x$ is denoted by $\x_i$.
We use $\langle \cdot,\cdot \rangle$ to denote the Euclidean inner product. Let $\|\A\|=\sqrt{\sum_{ij}\A_{ij}^2}$ and $\|\A\|_1=\sum_{ij}|\A_{ij}|$ be the Frobenius norm and $1$-norm of a matrix $\A$, respectively. The gradient of a differentiable function $f(\x)$ is denoted by $\nabla f(\x)=(\nabla_1 f(\x),\nabla_2  f(\x),\cdots,\nabla_n f(\x))$.
Let ${\sgn}(\x)=({\sgn}(\x_1),{\sgn}(\x_2),\cdots,{\sgn}(\x_n))$ denote the  element-wise sign function where ${\sgn}(\x_i)=1$ for $\x_i \geq 0$ and $-1$ otherwise. The Hamming distance between two binary vectors $\y$ and $\z$ of equal length is defined by $d_{H}(\y,\z)$, which is the number of positions at which the corresponding entries are different. For a set $S$, let $\#S$ denote the number of elements in $S$.
%Let $[a,b]$ be the closed interval between two real numbers $a$ and $b$.

\subsection{Main algorithm}
The proposed DPCD algorithm runs iteratively between principal coordinate update and neighborhood search.

\paragraph{Principal coordinate update.} The basic idea is that, in the $k$-th iteration, we change the sign of some adaptively chosen entries of the binary vector $\x^k$, such that the value of the loss function should decrease steeply after each change. To achieve this goal, we focus on $L$-principal coordinates (see the following definition), which have major influences on the value change of the loss function.
\begin{definition}
Let $f(\x)$ be a differentiable function and
$L>0$ be a positive constant. A coordinate index $i$ is called an {\sl $L$-principal coordinate} of $\x \in \{-1, 1\}^n$ if the product ~$ \x_i \cdot \nabla_i f(\x) \geq L$.
\end{definition}
One motivation of our method is SGM \cite{liu2014discrete}. In the $k$-th iteration of SGM, the linear surrogate of the objective function $f(\x)$ is given as:
\begin{align}
\hat{f}^k(\x)=f(\x^k)+\langle \nabla f(\x^k), \x-\x^k  \rangle.
\label{eq:surrogate}
\end{align}
Then $\x^{k+1}$ is obtained by minimizing this surrogate function:
\begin{align}
    \x^{k+1}=\arg\min_{\x\in \{\pm 1\}^n} \hat{f}^k(\x)=-{\sgn}(\nabla f(\x^k)).  \label{eq:updatexk}
\end{align}
The sequence obtained by Eq. \eqref{eq:updatexk} is guaranteed to converge if $f(\x)$ is concave. However, from Lemma \ref{prop:example} in Subsection \ref{sec:compare} we know, for non-concave functions, SGM \cite{liu2014discrete} may generate divergent sequences and never converge, since changing too many entries of $\x$ at a time may increase the value of the loss function.
To overcome this difficulty, the proposed DPCD method only changes the signs of entries whose absolute values of directional derivatives are large (entries with principal coordinates).  This yields convergence for a wide class of loss functions (please refer to Lemma \ref{prop:example} and Theorem \ref{th:converhgence}). Also, in the proposed algorithm, the constraint $\Omega_r$ is always satisfied after each iteration.

To be more specific, given $\x^k$ at the $k$-th iteration, we first calculate the gradient $\nabla f(\x^k)=(\nabla_1 f(\x^k), \nabla_2 f(\x^k),\cdots,\nabla_n f(\x^k))$ for the differentiable loss function $f(\x)$. Next, we derive some proper thresholds $L_1, L_2$ based on $\nabla f(\x^k)$. %There are two different choices in this step.
When $\nabla f$ is $L_0$-Lipschitz continuous on $[-1,1]^n$, where $L_0$ is easy to calculate, we simply set
\begin{align}\label{eq:derive_L0}
L_1= L_2=L_0+\epsilon
\end{align}  for some sufficiently small positive constant  $\epsilon>0$, i.e., we consider $(L_0+\epsilon)$-principal coordinates. For example, in Lemma \ref{prop:example}, it is easy to see that  $L_0=1$, then we take $L_1=L_2=1+\epsilon$ for some small $\epsilon>0$ in the algorithm. When $L_0$ does not exist or is difficult to compute, we let $L_1$ and $L_2$ be the averages of the absolute values of the positive and negative entries in the gradient $\nabla f(\x^k)$, respectively, i.e.,
\begin{align}\label{eq:derive_L}
L_1=\frac{1}{n_1}\sum_{\nabla_i f(\x^k)>0} \nabla_i f(\x^k) \,\, \text{and} \,\, L_2=-\frac{1}{n_2}\sum_{\nabla_i f(\x^k)<0} \nabla_i f(\x^k),
\end{align}
where $n_1$ and $n_2$ are the numbers of positive and negative entries in $\nabla f(\x^k)$, respectively. With the given thresholds $L_1$ and $L_2$, we set $S^{k+}=\{1\leq i \leq n: \nabla_i f(\x^k)>\alpha_1 L_1,\, \x^k_i=1\}$ and $S^{k-}=\{1\leq i \leq n: \nabla_i f(\x^k)<-\alpha_2 L_2,\, \x^k_i=-1\}$ to be the sets of $L_1$-principal coordinates with positive partial derivatives and $L_2$-principal coordinate with negative partial derivatives, respectively, where $\alpha_1$ and $\alpha_2$ are some parameters in $[0.1,10]$ which will be learned depending on the tasks and datasets.
%For the index $i\in S^{k+}$, we have  $\x^k_i=1$ and $\nabla_i f(\x^k)$ is larger than the threshold $L_1$, thus it has a high chance that $f(\x^k)$ decrease when we change the sign of $\x^k_i$.
%The similar argument holds for $S^{k-}$.
If the restriction condition is $\x\in \Omega_{-1}$, we update $\x^{k+1}_i$ by solving $\min_{\x\in \{\pm 1\}^n} \hat{f}^k(\x)$ in Eq. \eqref{eq:updatexk} with respect to $i \in S^{k+} \cup S^{k-}$ (other entries of $\x$ are fixed) and derive:
\begin{align}
    \x^{k+1}_{i}=-\sgn(\nabla_{i} f(\x^k))=-\x^{k}_{i}.   \label{eq:updateBk}
\end{align}
In other words, we change the sign of $\x^k_i$ for $\x^k_i=1$ with $\alpha_1 L_1$-principal coordinates, and for $\x^k_i=-1$ with $\alpha_2 L_2$-principal coordinates.
If the number of $1$s in $\x$ is required to be fixed (i.e., the restriction condition is $\x\in \Omega_{r}$ for some $r\in \mathbb{N}$), we update Eq. \eqref{eq:updateBk}  with respect to the $m$ largest absolute values in $\{|\nabla_i f(\x^k)|: i\in S^{k+}\}$ and $\{|\nabla_j f(\x^k)|: j\in S^{k-}\}$, respectively, where $m=\min\{\#S^{k+},\, \#S^{k-}\}$ (such procedure guarantees that $\x^k\in \Omega_{r}$ implies $\x^{k+1}\in \Omega_{r}$).
When the complexity of gradient calculations is low, the updating is very fast.
A complexity analysis of the proposed DPCD is given in Subsection \ref{sec:hashing}, which shows that the algorithm complexity of DPCD for supervised discrete hashing is linear and thus the algorithm runs very fast for large-scale datasets.

% Sort  $\{|\nabla_i f(\x^k)|: i\in S^{k+}\}$ and $\{|\nabla_j f(\x^k)|: j\in S^{k-}\}$ in descending order as $|\nabla_{i_1} f(\x^k)|\geq |\nabla_{i_2} f(\x^k) |  \geq |\nabla_{i_3} f(\x^k)| \geq \cdots$, and  $|\nabla_{j_1} f(\x^k)|\geq |\nabla_{j_2} f(\x^k)|  \geq |\nabla_{j_3} f(\x^k)| \geq\cdots$, respectively; Update $\x^{k+1}_{i_m}=-\x^{k}_{i_m}$ and $\x^{k+1}_{j_m}=-\x^{k}_{j_m}$ for $1\leq m \leq \min\{\#S^{k+},\, \#S^{k-}\}$;

\paragraph{Neighborhood search.} We add an optional heuristic neighborhood search after the principal coordinate update to avoid saddle points. In practice, we run one neighborhood search after $T$ principal coordinate updates, where $T$ is between 10 to 20.
First, we define the concept of \emph{$m$-neighbors} for a point $\x\in \Omega_r$ where $m\in\mathbb{N}$ is some small positive integers. When $r =-1$, the set of $m$-neighbors for $\x\in \Omega_{-1}$ is denoted by $N_{-1}(\x):=\{  \y \in \{\pm 1\}^n: 0<d_{H}(\y,\x) \leq m \}$, which is the set of points with a Hamming distance at most $m$ from $\x$. When $r \geq 0$, the set of $m$-neighbors for $\x\in \Omega_{r}$ is denoted by $N(\x):=\{  \y\in \{\pm 1\}^n: 0<d_{H}(\y,\x) \leq 2m,\,\, \sum_{i=1}^n \y_i = \sum_{i=1}^n \x_i \}$, which is the set of points obtained by interchanging at most $m$ pairs of entries $1$ and $-1$ in $\x$. For instance, when $m=1$, we have $N_{-1}((1,-1,1,1)):=\{  (-1,-1,1,1),(1,1,1,1),(1,-1,-1,1),(1,-1,1,-1)  \}$ and $N((1,-1,1,1)):=\{  (-1,1,1,1),(1,1,-1,1),(1,1,1,-1)\}$.

In a neighbor search for some $\x^*\in \Omega_{-1}$ (or $\Omega_{r}$ with $r\in\mathbb{N}_{\geq 0}$), the aim is to find the point $\y\in N_{-1}(\x^*)\cup \{\x^*\}$ (or~$N(\x^*)\cup \{\x^*\}$) with the minimal function value $f(\y)-f(\x^*)$ (or equivalently, $f(\y)$). In practice, we can sample from $N_{-1}(\x^*)$ (or $N(\x^*)$) instead of iterating over all points, if $n$ is large or calculating $f(\y)-f(\x^*)$ is slow.
This neighborhood search step is helpful for finding a local minimum point.
%For example, when $r=1$, $f(\x)=\frac12 \x^\top \A  \x$ is a quadratic function, and $\y^{j_0}=(\x^*_1,\x^*_2,\cdots,\x^*_{j_0-1},-\x^*_{j_0},\x^*_{j_0+1},\cdots,\x^*_{n})$ is obtained by replacing $\x^*_{j_0}$ with $-\x^*_{j_0}$ in $\x^*$, we have $f(\y^{j_0})-f(\x^*)=-\sum_{j\neq j_0}(\A_{j_0j}+\A_{jj_0})\,\x^*_{j_0}\x^*_{j}$, with time complexity $O(n)$ to calculate.  Then we just need to compare these values for $1\leq j_0 \leq n$ and find the minimal one.
The proposed algorithm is summarized in Algorithm \ref{Alg:1}.

\begin{algorithm}[H]\caption{Discrete Principal Coordinate Descent (DPCD) }\label{Alg:1}
\small{
\KwIn{Loss function $f(\x)$, code length $n$, the restriction $\Omega_r$ where $r=-1 \,\text{or}\, r\in \mathbb{N}_{\geq 0}$, parameters $\alpha_1$, $\alpha_2$.}
\KwOut{Binary codes $\x^*$.}

Initialize $\x^*$ by the sign of some random vector according to $\Omega_r$;~ $\x^1=\x^*$ and $k=1$;

\While{not converge or not reach maximum iterations}{
Calculate the gradient $\nabla f(\x^k)=(\nabla_1 f(\x^k), \nabla_2 f(\x^k),\cdots,\nabla_n f(\x^k))$;

Derive proper thresholds $L_1, L_2$ by Eq. \eqref{eq:derive_L0} or Eq. \eqref{eq:derive_L};

Build sets $S^{k+}=\{i: \nabla_i f(\x^k)>\alpha_1 L_1,\, \x^k_i=1\}$ and $S^{k-}=\{i: \nabla_i f(\x^k)<-\alpha_2 L_2,\, \x^k_i=-1\}$;

    \eIf{the restriction condition is $\x\in \Omega_{-1}$ (i.e., $r=-1$)}
    {
        Update $\x^{k+1}_{i}=-\sgn(\nabla_{i} f(\x^k))=-\x^{k}_{i}$ for $i \in S^{k+} \cup S^{k-}$;
    }
    {
        Sort $\{|\nabla_i f(\x^k)|: i\in S^{k+}\}$ and $\{|\nabla_j f(\x^k)|: j\in S^{k-}\}$ in descending order as $|\nabla_{i_1} f(\x^k)| \geq |\nabla_{i_2} f(\x^k) |  \geq |\nabla_{i_3} f(\x^k)| \geq \cdots$, and  $|\nabla_{j_1} f(\x^k)|\geq |\nabla_{j_2} f(\x^k)|  \geq |\nabla_{j_3} f(\x^k)| \geq\cdots$, respectively;

        Update $\x^{k+1}_{i_l}=-\x^{k}_{i_l}$ and $\x^{k+1}_{j_l}=-\x^{k}_{j_l}$ for $1\leq l \leq \min\{\#S^{k+},\, \#S^{k-}\}$;
    }

(Optional) Neighborhood search for $\x^{k+1}$;

$k=k+1$;
}
\textbf{Return} $\x^*=\x^{k+1}$.
}
\end{algorithm}

\subsection{Convergence comparison: DPCD vs. SGM} \label{sec:compare}

One of the differences between our DPCD and SGM \cite{liu2014discrete} is the choice of $\x_i^k$ that should be updated by Eq. \eqref{eq:updateBk} at the $k$-th iteration. SGM updates Eq. \eqref{eq:updateBk} for each $i$, while the proposed DPCD only changes the sign of $\x_i^k$ when the coordinate indexes is $L$-principal for some $L$. This difference is crucial to the convergences of the algorithms. More specifically, SGM can only guarantee convergence for the minimization of concave functions, while DPCD converges in finite steps for any functions with Lipschitz continuous gradients. We show the superiority of the proposed DPCD by the following example. The case $n=2$ is presented in Figure \ref{fig:lemma1}.

\begin{wrapfigure}{r}{0.37\textwidth}
\vspace{-5ex}
\includegraphics[width=0.37\textwidth]{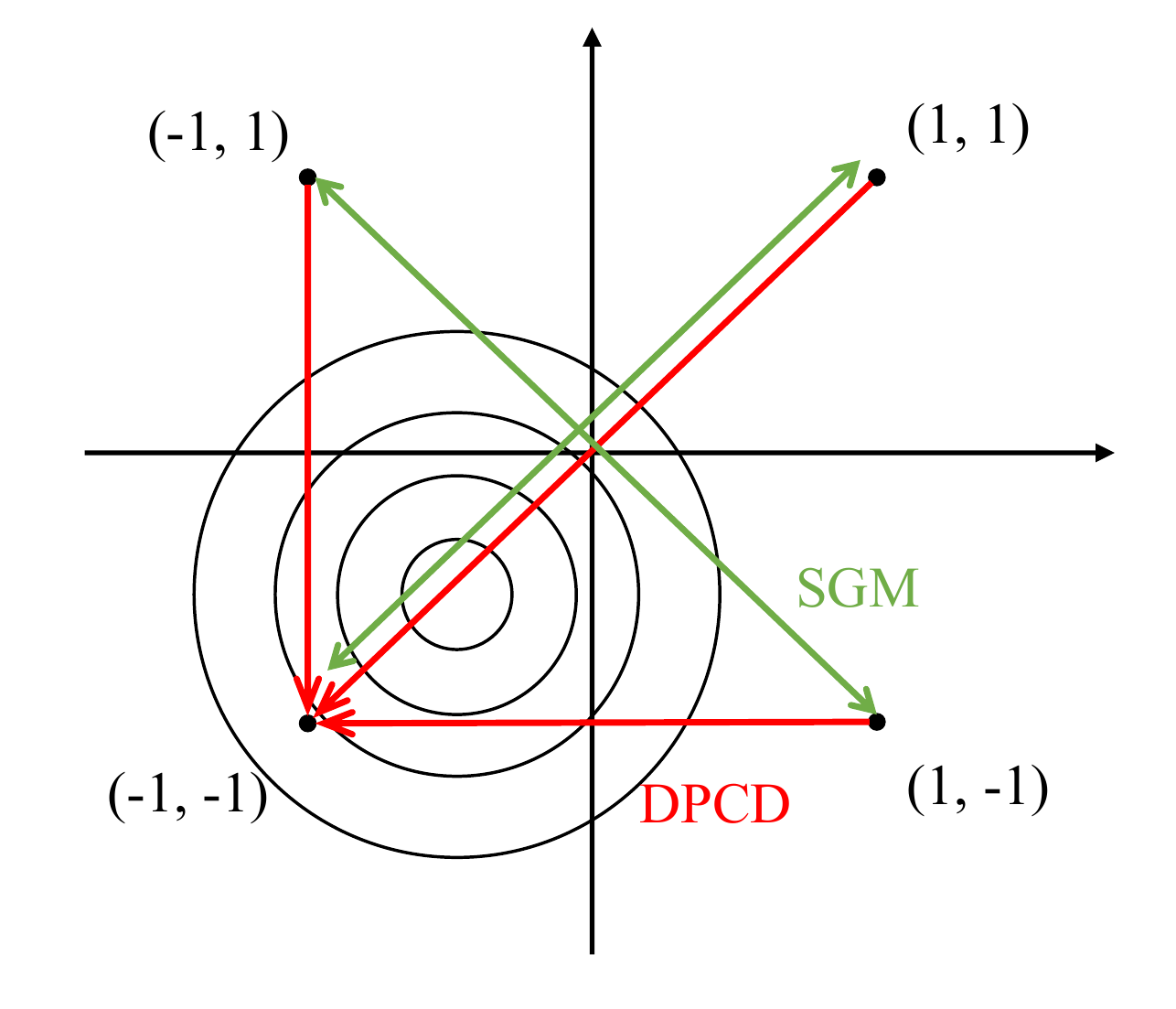}
\vspace{-3ex}
\caption{The optimization routes of DPCD and SGM on a 2-d example. The black circles are the contour lines of the objective function. Our method DPCD can converge to the optimum from any initialization points in only one step, while~SGM always oscillates between two points. In fact, each $\beta_i$ can be in the open~interval (-1,1), and the optimum is~decided by the signs of all $\beta_i$.  }\label{fig:lemma1}
\vspace{-4ex}
\end{wrapfigure}

\begin{lemma}\label{prop:example}
Consider the problem
\begin{equation*}%\label{Eq:specificEg2}
\min_{\x\in   \{ \pm 1\}^n} f(\x) := \min_{(\x_1,\x_2,\cdots,\x_n)\in   \{ \pm 1\}^n} ~  \frac{1}{2}\sum_{i=1}^n (\x_i+{\beta_i})^2,
\end{equation*}
%\vspace{-2ex}
where $0<\beta_i<1$. Let $\beta=\min_i \beta_i$.
It holds that:
(1)
SGM generates a divergent sequence for any initial point;
(2)
With parameters $\alpha_1=\alpha_2=1$ and $0<\epsilon<\beta$, the proposed DPCD method always converges to the optimal solution.
\end{lemma}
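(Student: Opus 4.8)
The plan is to exploit the fully separable structure of the objective. Writing $f(\x)=\sum_{i=1}^n \tfrac12(\x_i+\beta_i)^2$ with $0<\beta_i<1$, each summand is strictly smaller at $\x_i=-1$ than at $\x_i=1$, so the unique minimizer is $\x^*=-\1$. Moreover $\nabla_i f(\x)=\x_i+\beta_i$, i.e., $\nabla f$ is the affine map $\x\mapsto\x+\beta$, which is $1$-Lipschitz (indeed an isometry) on $[-1,1]^n$; hence $L_0=1$ and, by Eq. \eqref{eq:derive_L0}, DPCD uses $L_1=L_2=1+\epsilon$. These two observations are all the structure the proof needs.

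For claim (1) I would just unwind the SGM update $\x^{k+1}_i=-\sgn(\nabla_i f(\x^k))=-\sgn(\x^k_i+\beta_i)$ by a two-case check: if $\x^k_i=1$ then $\x^k_i+\beta_i=1+\beta_i>0$, while if $\x^k_i=-1$ then $\x^k_i+\beta_i=\beta_i-1<0$ since $\beta_i<1$; in both cases $\x^{k+1}_i=-\x^k_i$. Thus $\x^{k+1}=-\x^k$ for every $k$, so the iterates alternate forever between $\x^1$ and $-\x^1$, which are distinct for any $n\geq1$; the sequence therefore never converges, regardless of the initialization.

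For claim (2), with $\alpha_1=\alpha_2=1$ the threshold for both index sets equals $1+\epsilon$, so I would compute $S^{k+}$ and $S^{k-}$ explicitly. For an index with $\x^k_i=1$ we have $\nabla_i f(\x^k)=1+\beta_i$, and $1+\beta_i>1+\epsilon$ precisely because $\epsilon<\beta\le\beta_i$; hence every such $i$ belongs to $S^{k+}$. For an index with $\x^k_i=-1$ we have $\nabla_i f(\x^k)=\beta_i-1$, and since $\beta_i>0>-\epsilon$ we get $\beta_i-1>-1-\epsilon$, so no such $i$ enters $S^{k-}$ and $S^{k-}=\emptyset$. Because the restriction is $\Omega_{-1}$, the update $\x^{k+1}_i=-\x^k_i$ for $i\in S^{k+}\cup S^{k-}$ flips exactly the $+1$ coordinates of $\x^k$ to $-1$ and leaves the rest fixed, so $\x^2=-\1=\x^*$ after a single iteration (and $\x^1=\x^*$ already if the random initialization happens to be $-\1$). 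A final one-line check that at $\x^*$ both $S^{2+}$ and $S^{2-}$ are empty shows the algorithm stays at $\x^*$, i.e., it has converged to the optimal solution.

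Every computation here is elementary; the only place demanding care is getting the directions of the two threshold inequalities — and the matching sign conventions in $S^{k+}$ versus $S^{k-}$ — exactly right, since a sign slip would spuriously make $S^{k-}$ nonempty (causing oscillation) or $S^{k+}$ empty (causing stalling). A secondary subtlety worth a sentence is clarifying what ``divergent'' means for a bounded discrete sequence: here it means the iterates are $2$-periodic with two distinct values, hence non-convergent.
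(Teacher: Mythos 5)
Your proposal is correct and follows essentially the same route as the paper: both parts hinge on the two-case sign check showing $\x^{k+1}=-\x^k$ for SGM, and on the explicit computation that $S^{k+}=\{i:\x^k_i=1\}$ and $S^{k-}=\emptyset$ under the threshold $L_1=L_2=1+\epsilon<1+\beta$, so DPCD reaches $-\1$ in one step and stays there. The only (harmless) nitpick is that you write $\nabla f$ as $\x\mapsto\x+\beta$ with $\beta$ denoting the vector $(\beta_1,\ldots,\beta_n)$, whereas the lemma reserves $\beta$ for $\min_i\beta_i$.
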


\begin{proof}
First, we apply the method SGM to this problem. The gradient of $f(\x)$ is straightforward:
$\nabla f(\x) = (\x_1+{\beta_1},\x_2+{\beta_2},\cdots,\x_n+{\beta_n})$.
Since $0<\beta_i<1$, we have $\x=\sgn(\nabla f(\x))$ for any $\x\in   \{ \pm 1\}^n$. Then, by the updating rule fo SGM, $\x^{k+1}=-\sgn(\nabla f(\x^k))=-\x^k$. Therefore, starting from any point $\x\in   \{ \pm 1\}^n$, SGM always generates a divergent sequence $\x,-\x,\x,-\x,\x,-\x,\cdots$.

On the other hand,  since $\|\nabla f(\y)
-\nabla f(\z)\| = \|\y-\z\|$ for any $\y,\z\in   \{ \pm 1\}^n$, $f(\x)$ is a function with $1$-Lipschitz continuous gradient. Therefore, in DPCD, we set $1<L_1=L_2=1+\epsilon<1+\beta$. Then by definition,
\begin{align*}
  S^{k+}&=\{1\leq i \leq n: \nabla_i f(\x^k)>1+\epsilon,\, \x^k_i=1\}
  \\&=
  \{1\leq i \leq n: \x^k_i+\beta_i>1+\epsilon,\, \x^k_i=1\}
  \\&=
  \{1\leq i \leq n: \, \x^k_i=1\},
\end{align*}
 and
 \begin{align*}
 S^{k-}&=\{1\leq i \leq n: \nabla_i f(\x^k)<-1-\epsilon,\, \x^k_i=-1\}
 \\&=
 \{1\leq i \leq n: \x^k_i+\beta_i<-1-\epsilon,\, \x^k_i=-1\}
 \\&=
 \emptyset.
 \end{align*}
This implies that
\begin{align}
\x^{k+1}_{i}=
    \begin{cases}
    -{\sgn}(\nabla_{i} f(\x^k))=-\x^{k}_{i}=-1, \,\text{if}\,\, i \in S^{k+}  = S^{k+} \cup S^{k-}
    \\
    \x^{k}_{i}=-1, \,\text{if}\,\, i \notin S^{k+}.
    \end{cases}
\end{align}
 Then, we have $\x^{k+1}_{i}=-1$ for any $k\geq 2$ and $i$, and thus $\x^{2}=\x^{3}=\cdots=\x^{k}=(-1,-1,\cdots,-1)$ for $k\geq 2$. It is easy to check that $(-1,-1,\cdots,-1)$ is indeed the optimal point for the problem in Lemma $1$. Therefore, the proposed DPCD converges to the optimal solution in only one updating step for any initial point.
\end{proof}

\subsection{Theoretical convergence results} \label{sec:Convergence}
For simplicity, we ignore the neighborhood search part in the convergence analysis. Actually, the neighborhood search does not have any influence on the convergence since it always generates some binary vectors with non-increasing values of the loss function. Thus, we can only focus on the principal coordinate update part of the proposed DPCD method.

We derive the following convergence results.  When we say the algorithm converges in $T$ steps, we mean that the binary vector $\x^{T+1}$ obtained in $(T+1)$-th iteration equals $\x^{T}$ in $T$-th iteration (then the algorithm can stop here). Moreover, it is easy to see that our algorithm can converge to some local optimum with the help of neighborhood search (without neighborhood search, it only converges to some fixed binary vectors).

\begin{theorem}\label{th:converhgence}
Let $f :  \mathbb{R}^n \rightarrow \mathbb{R}$ be a differentiable function such that
$\nabla f$ is $L_0$-Lipschitz continuous on $[-1,1]^n$. Setting the thresholds $L_1=L_2=L_0+\epsilon$ where $\epsilon>0$, and ignoring the neighborhood search, Algorithm \ref{Alg:1} always converges in  $\frac{f_{max}-f_{min}}{2\epsilon}$ steps at most, where $f_{max}:=\max_{\mathbf{x}\in   \{ \pm 1\}^{n} } f(\x)$ and $f_{min}:=\min_{\mathbf{x}\in   \{ \pm 1\}^{n} } f(\x)$.
\end{theorem}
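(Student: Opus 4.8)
The plan is to exhibit a strict decrease of the potential $f(\x^k)$ at every iteration that actually moves the iterate, and then to cap the number of such iterations using the boundedness of $f$ on the binary cube. Fix an iteration $k$ and let $\mathcal{I}_k$ be the set of flipped coordinates: $\mathcal{I}_k=S^{k+}\cup S^{k-}$ in the unconstrained branch $r=-1$, and $\mathcal{I}_k=\{i_1,\dots,i_m,j_1,\dots,j_m\}$ with $m=\min\{\#S^{k+},\#S^{k-}\}$ in the constrained branch. In both cases $\x^{k+1}_i=-\x^k_i$ for $i\in\mathcal{I}_k$ and $\x^{k+1}_i=\x^k_i$ otherwise, so the theorem's notion of convergence at step $T$, namely $\x^{T+1}=\x^{T}$, is precisely $\mathcal{I}_T=\emptyset$; and once $\mathcal{I}_k=\emptyset$ the gradient and hence all later updates are unchanged, so the algorithm stays put. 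It therefore suffices to bound the number of iterations with $\mathcal{I}_k\neq\emptyset$.

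For such an iteration I would invoke the descent lemma: since $\nabla f$ is $L_0$-Lipschitz on the convex set $[-1,1]^n$ and $\x^k,\x^{k+1}\in\{\pm1\}^n\subseteq[-1,1]^n$,
\[
f(\x^{k+1})\le f(\x^k)+\langle\nabla f(\x^k),\x^{k+1}-\x^k\rangle+\tfrac{L_0}{2}\|\x^{k+1}-\x^k\|^2 .
\]
The vector $\x^{k+1}-\x^k$ is supported on $\mathcal{I}_k$ with each nonzero entry equal to $-2\x^k_i\in\{\pm2\}$, so $\|\x^{k+1}-\x^k\|^2=4\,\#\mathcal{I}_k$ and $\langle\nabla f(\x^k),\x^{k+1}-\x^k\rangle=-2\sum_{i\in\mathcal{I}_k}\x^k_i\nabla_i f(\x^k)$. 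The crucial observation is that every flipped index is a principal coordinate: for $i\in S^{k+}$ we have $\x^k_i=1$ and $\nabla_i f(\x^k)>\alpha_1 L_1$, for $i\in S^{k-}$ we have $\x^k_i=-1$ and $\nabla_i f(\x^k)<-\alpha_2 L_2$, hence $\x^k_i\nabla_i f(\x^k)>L_0+\epsilon$ under the choice $L_1=L_2=L_0+\epsilon$ (taking $\alpha_1=\alpha_2=1$ as in Lemma~\ref{prop:example}; more generally $\alpha_1,\alpha_2\ge1$ suffices), and the same lower bound holds for the sub-selected coordinates in the constrained branch since they still belong to $S^{k+}\cup S^{k-}$. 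Substituting,
\[
f(\x^{k+1})-f(\x^k)\le -2(L_0+\epsilon)\,\#\mathcal{I}_k+2L_0\,\#\mathcal{I}_k=-2\epsilon\,\#\mathcal{I}_k\le -2\epsilon ,
\]
where the last step uses $\#\mathcal{I}_k\ge1$.

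Finally, every iterate lies in $\Omega_r\subseteq\{\pm1\}^n$, so $f_{min}\le f(\x^k)\le f_{max}$ for all $k$. If iterations $1,\dots,T$ were all productive, telescoping the above gives $f_{min}\le f(\x^{T+1})\le f(\x^1)-2\epsilon T\le f_{max}-2\epsilon T$, whence $T\le (f_{max}-f_{min})/(2\epsilon)$. Consequently, within $(f_{max}-f_{min})/(2\epsilon)$ iterations some iteration must satisfy $\mathcal{I}_k=\emptyset$, i.e. $\x^{k+1}=\x^k$, which is the asserted convergence.

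I expect the only piece with any content to be the descent estimate — securing the exact per-coordinate cancellation $-2(L_0+\epsilon)+2L_0=-2\epsilon$ — which rests on (i) the elementary identity $\|\x^{k+1}-\x^k\|^2=4\,\#\mathcal{I}_k$ for sign flips on the cube and (ii) the principal-coordinate bound $\x^k_i\nabla_i f(\x^k)>L_0+\epsilon$ persisting into the constrained branch, where only a subset of $S^{k+}\cup S^{k-}$ is flipped. The applicability of the descent lemma on $[-1,1]^n$, the observation that convergence is absorbing, and the telescoping count are all routine. It is worth flagging the implicit assumption $\alpha_1,\alpha_2\ge1$ (in particular $\alpha_1=\alpha_2=1$) so that the threshold defining $S^{k\pm}$ is at least $L_0+\epsilon$.
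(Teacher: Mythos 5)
Your proof is correct and follows essentially the same route as the paper's: the descent lemma from $L_0$-Lipschitz continuity of $\nabla f$, the per-flipped-coordinate cancellation $-2(L_0+\epsilon)+2L_0=-2\epsilon$, and a telescoping count against $f_{max}-f_{min}$. Your explicit flagging of the implicit requirement $\alpha_1,\alpha_2\ge 1$ (so that flipped coordinates indeed satisfy $\x^k_i\nabla_i f(\x^k)>L_0+\epsilon$) is a point the paper's proof glosses over, and your treatment of the constrained branch is slightly more explicit, but the argument is the same.
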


\begin{proof}
Let $L=L_1=L_2=L_0+\epsilon$.
Since $\nabla f$ is $L_0$-Lipschitz, we have for any $\x,\y\in [-1,1]^n$,
\[
\|\nabla f(\x) - \nabla f(\y)\| \le L_0 \| \x-\y\|.
\]
Then, by Cauchy-Schwarz inequality,
\[
(\nabla f(\x) - \nabla f(\y)^\top(\x-\y)) \leq \|\nabla f(\x) - \nabla f(\y)\|\cdot \| \x-\y \| \leq  L_0 \rVert \x-\y\rVert^2.
\]
By the gradient monotonicity equivalence of convexity \cite[Page 40]{borwein2010convex}, this yields that $g(x) = \frac{L_0}{2}\|\x\|^2 - f(\x)$ is a convex function on $[-1,1]^n$. Therefore, due to the first-order equivalence of convexity \cite[Page 69]{boyd2004convex},
\[
\frac{L_0}{2}\|\y\|^2 - f(\y) \geq \frac{L_0}{2}\|\x\|^2 - f(\x) + (L_0\x-\nabla f(\x))^\top (\y-\x),
\]
which implies that
\[
f(\y)\le f(\x)+\nabla f(\x)^\top(\y-\x)+\frac{L_0}{2}\lVert \y-\x\rVert^2,~\forall \x,\y  \in [-1,1]^n.
\]
Let $\x=\x^{k}$ and $\y=\x^{k+1}$ in the above inequality, where $\x^{k+1}$ is determined by $\x^{k}$ and (8) in the  paper. We have
\begin{align}\label{eq:L-Lip1}
   f(\x^{k+1})\le f(\x^{k})+\nabla f(\x^{k})^\top(\x^{k+1}-\x^{k})+\frac{L_0}{2}\lVert \x^{k+1}-\x^{k}\rVert^2.
\end{align}

If $\x^{k+1} = \x^{k}$, we obtain $\x^{j+1} = \x^{j}$ for each $j\geq k$ due to the updating rule.

If $\x^{k+1}_i \neq  \x^{k}_i$ for some $1\leq i \leq n$, by the updating rule we have, $\x^{k+1}_i = -{\sgn}(\nabla_{i} f(\x^k))= - \x^{k}_i$ and $\|\nabla_{i} f(\x^k)\| > L$.
 Then,  \[\nabla_{i} f(\x^k) (\x^{k+1}_i  - \x^{k}_i) = -2\,\nabla_{i} f(\x^k)\,  {\sgn}(\nabla_{i} f(\x^k))= -2\, \|\nabla_{i} f(\x^k)\| <-2L\]
and $\frac{L_0}{2}\lVert \x^{k+1}_i-\x^{k}_i\rVert^2=\frac{L_0}{2}\cdot 4=2L_0.$ Therefore,
\begin{align}\label{eq:L-Lip2} \nabla_{i} f(\x^{k})^\top(\x^{k+1}_{i}-\x^{k}_{i})+\frac{L_0}{2}\lVert \x^{k+1}_{i}-\x^{k}_{i}\rVert^2<-2(L-L_0).
\end{align}
If $\x^{k+1} \neq \x^{k}$, we know $\{j:\, \x^{k+1}_j \neq  \x^{k}_j\}$ is not empty. Then, by Eq. \eqref{eq:L-Lip1} and Eq. \eqref{eq:L-Lip2} we have,
\begin{align}\label{eq:L-Lip3}  \nonumber
    f(\x^{k+1}) - f(\x^{k})
    \leq &
    \nabla f(\x^{k})^\top(\x^{k+1}-\x^{k})+\frac{L_0}{2}\lVert \x^{k+1}-\x^{k}\rVert^2
    \\ = &\sum_{j=1}^n  \left(\nabla_{j}    \nonumber f(\x^{k})(\x^{k+1}_{j}-\x^{k}_{j})+\frac{L_0}{2}\lVert \x^{k+1}_{j}-\x^{k}_{j}\rVert^2\right)
    \\ = & \sum_{j:\,\x^{k+1}_j =  \x^{k}_j}  \left(\nabla_{j}   \nonumber  f(\x^{k})(\x^{k+1}_{j}-\x^{k}_{j})+\frac{L_0}{2}\lVert \x^{k+1}_{j}-\x^{k}_{j}\rVert^2\right)
    \\& + \sum_{j:\,\x^{k+1}_j \neq  \x^{k}_j}  \left(\nabla_{j}   \nonumber  f(\x^{k})(\x^{k+1}_{j}-\x^{k}_{j})+\frac{L_0}{2}\lVert \x^{k+1}_{j}-\x^{k}_{j}\rVert^2\right)
    \\ = & 0 + \sum_{j:\,\x^{k+1}_j \neq  \x^{k}_j}  \left(\nabla_{j}  \nonumber f(\x^{k})^\top(\x^{k+1}_{j}-\x^{k}_{j})+\frac{L_0}{2}\lVert \x^{k+1}_{j}-\x^{k}_{j}\rVert^2\right)
    \\< & -2(L-L_0)  \nonumber
    \\= & -2\epsilon
    .
\end{align}
%\sum_{\substack{a\\b}}
This means that, in each updating step, the function value decreases at least $2\epsilon$, which completes the proof since the feasible set of Problem (2) in the main paper is finite.
\end{proof}

The above theorem is very versatile since most loss functions in practice are $L_0$-Lipschitz continuous on $[-1,1]^n$ for some $L_0 \in\mathbb{R}^+$. Furthermore, since $n$ is finite,  $ \{ \pm 1\}^{n} $ is a finite set, thus the above $f_{max}$ and  $f_{min}$ always exist and are finite. When $f(\x)$ is quadratic, we obtain the following direct corollary.
%The proofs of Theorem \ref{th:converhgence} and Corollary \ref{th:converhgence_cor} are given in the Appendix.

\begin{cor}\label{th:converhgence_cor}
Let $f(\x)=\x^\top \A \x + \c^\top\x+d$ be some quadratic function where $\A\in\mathbb{R}^{n\times n}$, $\x,\c\in \mathbb{R}^n$, and $d\in \mathbb{R}$, in Theorem \ref{th:converhgence}. Then Algorithm \ref{Alg:1} always converges in $\frac{\|\A\|_1+\|\c\|_1}{\epsilon}$ steps at most.
\end{cor}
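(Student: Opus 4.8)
The plan is to invoke Theorem~\ref{th:converhgence} directly, so the only work is to (i) check that its hypothesis is satisfied and (ii) bound $f_{max}-f_{min}$ in terms of $\|\A\|_1$ and $\|\c\|_1$.

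First I would observe that for $f(\x)=\x^\top\A\x+\c^\top\x+d$ the gradient $\nabla f(\x)=(\A+\A^\top)\x+\c$ is an affine map, hence Lipschitz continuous on all of $\mathbb{R}^n$ (for instance with $L_0=\|\A+\A^\top\|$ in the Frobenius norm), so in particular $\nabla f$ is $L_0$-Lipschitz on $[-1,1]^n$ and Theorem~\ref{th:converhgence} applies with thresholds $L_1=L_2=L_0+\epsilon$. Consequently Algorithm~\ref{Alg:1} converges in at most $\frac{f_{max}-f_{min}}{2\epsilon}$ steps, and it remains only to estimate $f_{max}-f_{min}$.

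Next I would bound the oscillation of $f$ over the cube $\{\pm1\}^n$. For any $\x\in\{\pm1\}^n$ we have $|\x_i\x_j|=1$ and $|\x_i|=1$, so by the triangle inequality $|\x^\top\A\x|=\bigl|\sum_{i,j}\A_{ij}\x_i\x_j\bigr|\le\sum_{i,j}|\A_{ij}|=\|\A\|_1$ and $|\c^\top\x|\le\sum_i|\c_i|=\|\c\|_1$; hence $d-\|\A\|_1-\|\c\|_1\le f(\x)\le d+\|\A\|_1+\|\c\|_1$ for every $\x\in\{\pm1\}^n$. Taking the maximum and the minimum over this finite set gives $f_{max}-f_{min}\le 2(\|\A\|_1+\|\c\|_1)$. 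Substituting this into the bound from Theorem~\ref{th:converhgence} yields
\[
\frac{f_{max}-f_{min}}{2\epsilon}\le\frac{2(\|\A\|_1+\|\c\|_1)}{2\epsilon}=\frac{\|\A\|_1+\|\c\|_1}{\epsilon},
\]
which is exactly the claimed bound.

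I do not expect any real obstacle: the argument is entirely routine once Theorem~\ref{th:converhgence} is available. The only point needing (trivial) care is confirming that a Lipschitz constant $L_0$ for $\nabla f$ exists so the theorem is applicable, and I would note that symmetry of $\A$ plays no role here, since the estimate $|\x^\top\A\x|\le\|\A\|_1$ uses only $|\x_i\x_j|=1$.
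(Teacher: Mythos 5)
Your proposal is correct and follows essentially the same route as the paper: bound $|\x^\top\A\x|\le\|\A\|_1$ and $|\c^\top\x|\le\|\c\|_1$ on $\{\pm1\}^n$ to get $f_{max}-f_{min}\le 2(\|\A\|_1+\|\c\|_1)$, then substitute into the bound of Theorem~\ref{th:converhgence}. The only addition is your (harmless and reasonable) explicit check that $\nabla f$ is Lipschitz, which the paper leaves implicit.
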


\begin{proof}
Since $\x\in\{ \pm 1\}^{n}$, we have $-\|\A\|_1-\|\c\|_1+d\leq f(\x)\leq \|\A\|_1+\|\c\|_1+d$. Then $f_{max}-f_{min}\leq 2(\|\A\|_1+\|\c\|_1)$. By Theorem $1$ we complete the proof.
\end{proof}

\section{Experiments}\label{sec:experiments}
In this section, we compare the proposed DPCD algorithm with several state-of-the-art methods on two binary optimization tasks:  dense subgraph discovery and binary hashing.
All codes are implemented in MATLAB using a workstation with an Intel 8-core 2.6GHz CPU and 32GB RAM.

%\newpage

\subsection{Dense subgraph discovery}

% \begin{wraptable}{r}{0.56\textwidth}
% \begin{minipage}{0.56\textwidth}
%\vspace{-1ex}
\begin{table}
\caption{Statistics for the graphs used in the dense subgraph discovery experiments.}
\center
\resizebox{0.65\linewidth}{!}{
\begin{tabular}{cccc}
  \hline
 Graph & \# Nodes & \# Arcs & \# Arcs/\# Nodes \\
  \hline
 uk-2007-05 &100000&3050615& 30.506 \\
 dblp-2010 & 326186 & 1615400 & 4.952 \\
 eswiki-2013    &972933	& 23041488& 23.683  \\
 hollywood-2009 &1139905	& 113891327& 99.913\\
\hline
\end{tabular}
}
\label{table:graph}
%\vspace{-2ex}
\end{table}
% \end{minipage}
% \end{wraptable}

\paragraph{Optimization problem.}
Dense subgraph discovery \cite{ravi1994heuristic,rozenshtein2018finding,yuan2016binary,yuan2013truncated} has many applications in graph mining, such as real-time story identification \cite{angel2012dense},  finding correlated genes \cite{zhang2005general} and graph visualization \cite{alvarez2006large}.  Let $\sl{G}$ be a given undirected weighted graph with $n$ nodes, and $k$ be a given positive integer such that $1\leq k\leq n$. The aim is to find the maximum density subgraph (the subgraph with the maximal sum of edge weights) with cardinality $k$. Let $\W\in\mathbb{R}^{n\times n}$ be the symmetric adjacency matrix of the graph~$G$, where $\W_{ij}$ denotes the weight of the edge between vertices $i$ and $j$. Then the problem can be formulated as the following optimization problem
\begin{equation}\label{eq:Problem dense2}
\max_{\x \in \{0,1\}^n} \,\, \x^{\top} \W \x, \quad\,\,\, \St ~~ \x^{\top} \mathbf{1} = k.
\end{equation}

Note that, in this case, the variables are $0$ or $1$ instead of $-1$ or $1$.
In order to translate the problem to the form in Problem \eqref{Eq:Problem1}, the substitution $\x=\frac{1}{2}(\y+\1)$ is adopted.
Therefore, Problem \eqref{eq:Problem dense2} is equivalent to:
\begin{equation}\label{eq:Problem dense3}
\min_{\y \in \{-1,1\}^n} \,\, -\y^{\top} \W \y- 2\y^\top\W\1-\1^\top\W\1, ~~~ \St ~~ \y^{\top} \mathbf{1} = 2k-n.
\end{equation}
In this way, the above problem can be approximately solved using our Algorithm \ref{Alg:1}.

\captionsetup[subfigure]{labelformat=empty}

%\captionsetup[subfigure]{labelformat = parens,labelsep=colon}

\begin{figure*} [htbp!]

%\captionsetup[FLOAT_TYPE]{labelformat = parens, labelsep=colon}
\centering

\subfigure{
\begin{minipage}[htbp!]{0.13\textwidth}
\centering
\vspace*{-0.1cm}
\includegraphics[width=2.15cm]{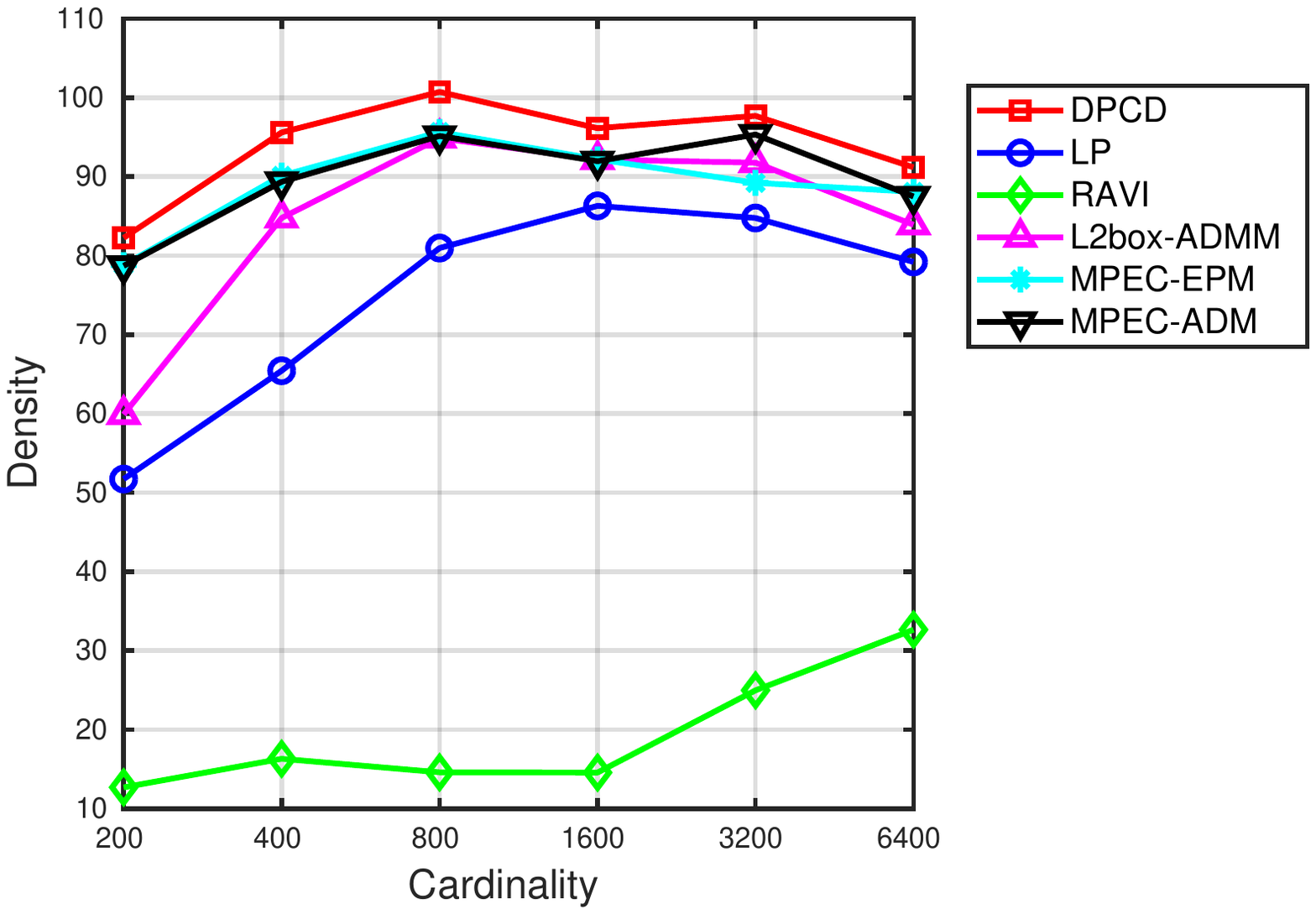}
\end{minipage}}
\addtocounter{subfigure}{-2}
\subfigure[(a)~ uk-2007-05]{
\begin{minipage}[htbp!]{0.195 \textwidth}
\centering
\vspace*{0.0cm}
\includegraphics[width=3.0cm]{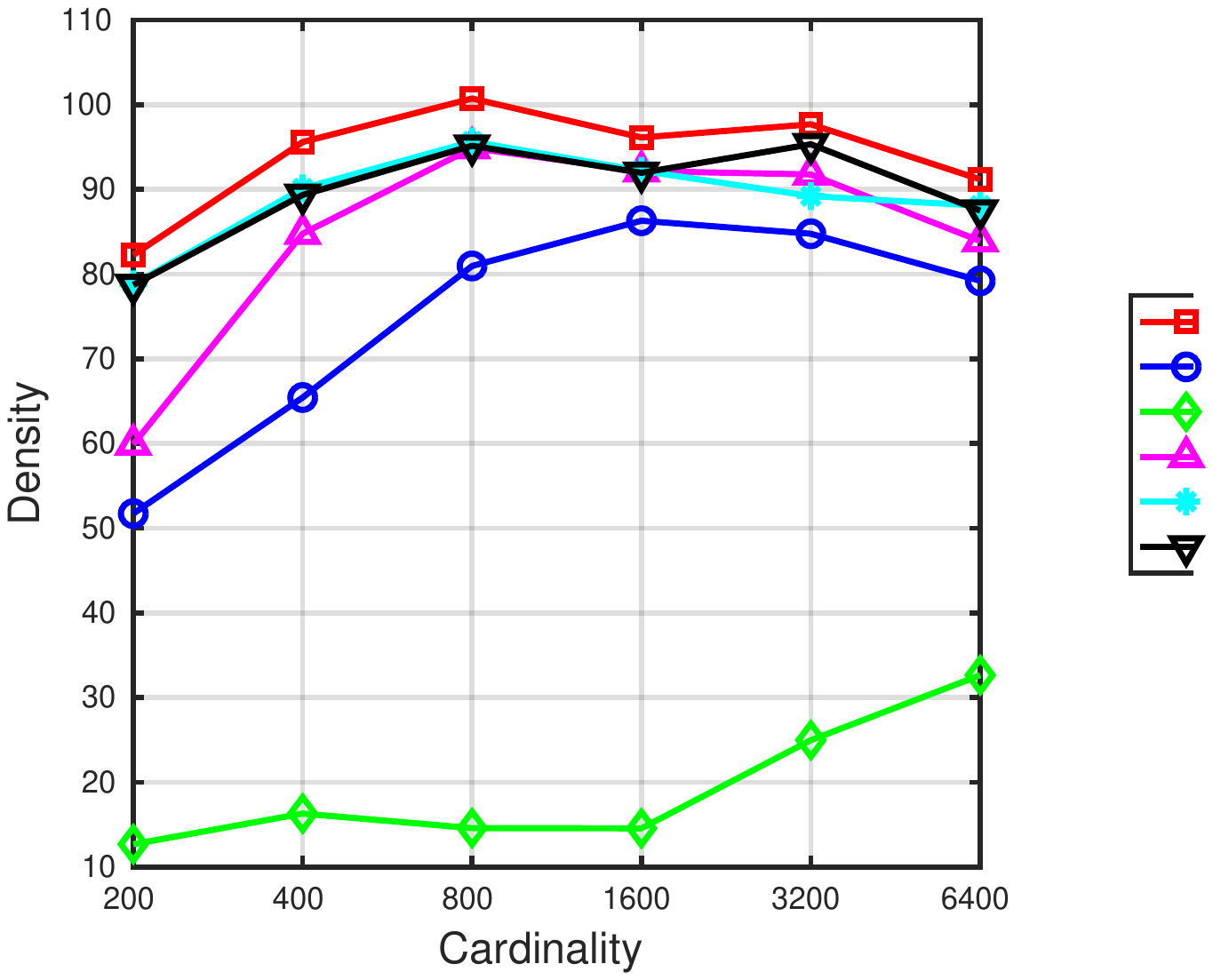}
\end{minipage}}
\addtocounter{subfigure}{-1}
\subfigure[(b)~ dblp-2010]{
\begin{minipage}[htbp!]{0.195 \textwidth}
\centering
\vspace*{0.0cm}
\includegraphics[width=2.93cm]{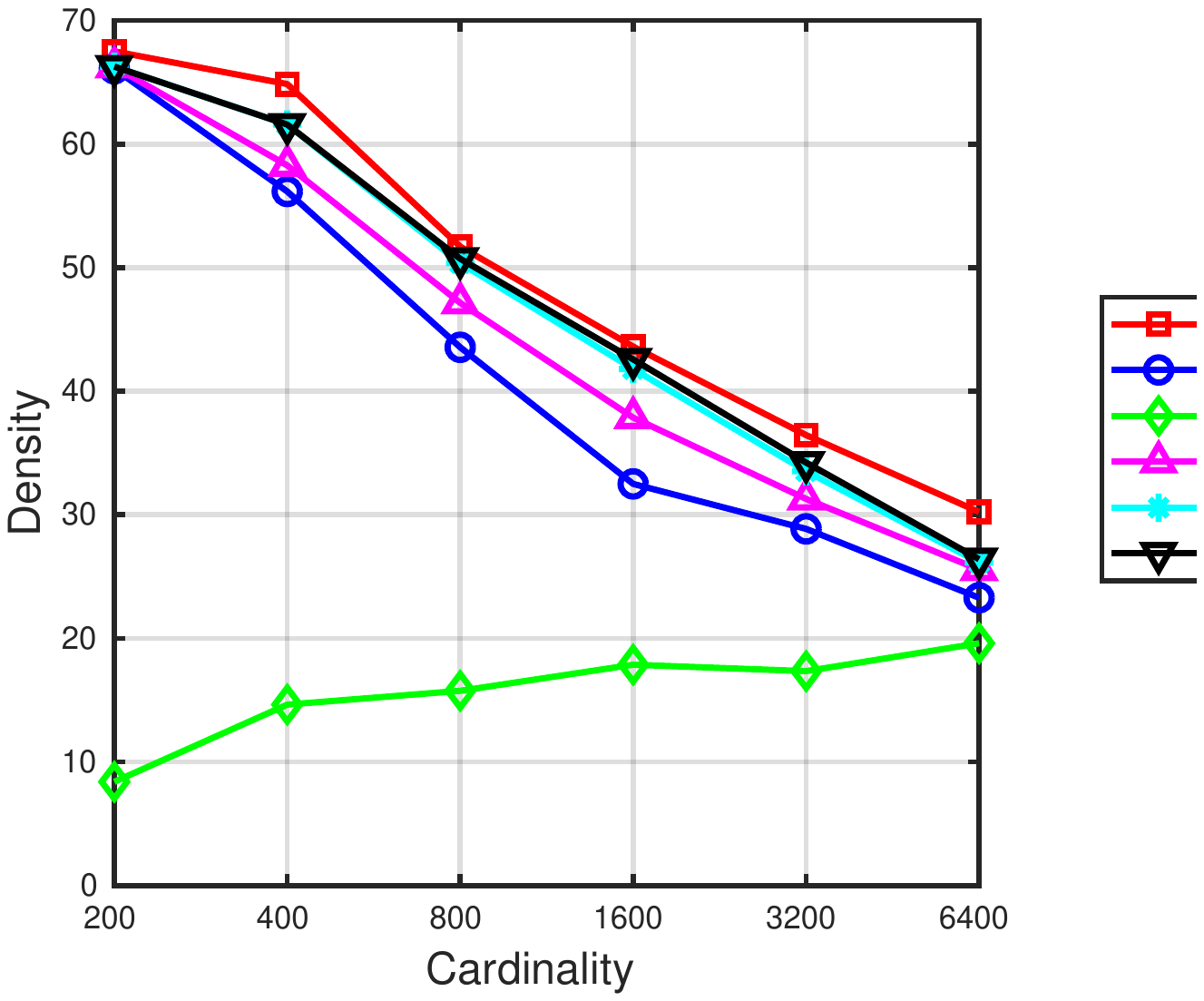}
\end{minipage}}
\addtocounter{subfigure}{-1}
\subfigure[~~(c)~ eswiki-2013]{
\begin{minipage}[htbp!]{0.195 \textwidth}
\centering
\vspace*{0.05cm}
\includegraphics[width=3.0cm]{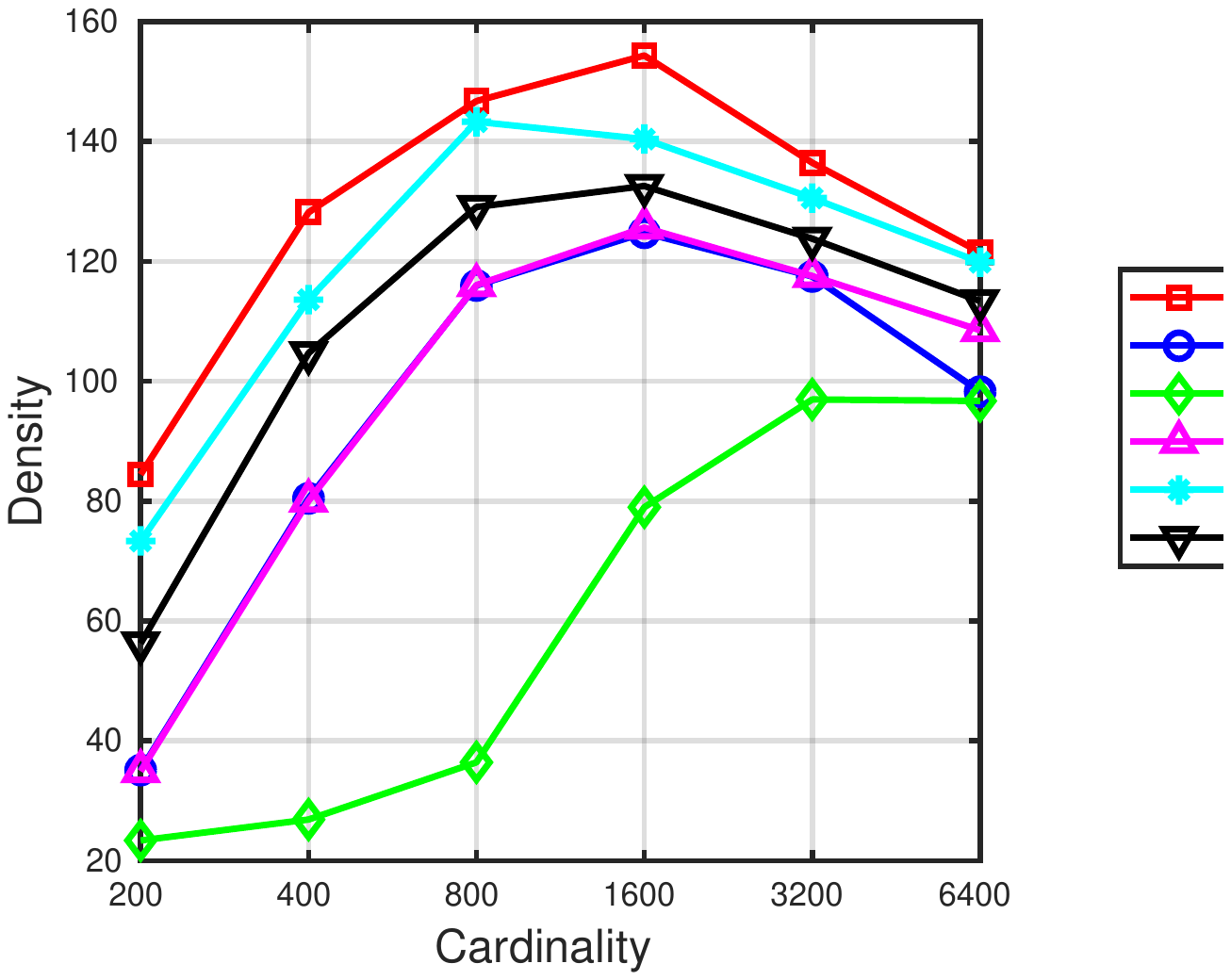}
\end{minipage}}
\addtocounter{subfigure}{-1}
\subfigure[~~(d)~ hollywood-2009]{
\begin{minipage}[htbp!]{0.195 \textwidth}
\centering
\vspace*{-0.cm}
\includegraphics[width=3.0cm]{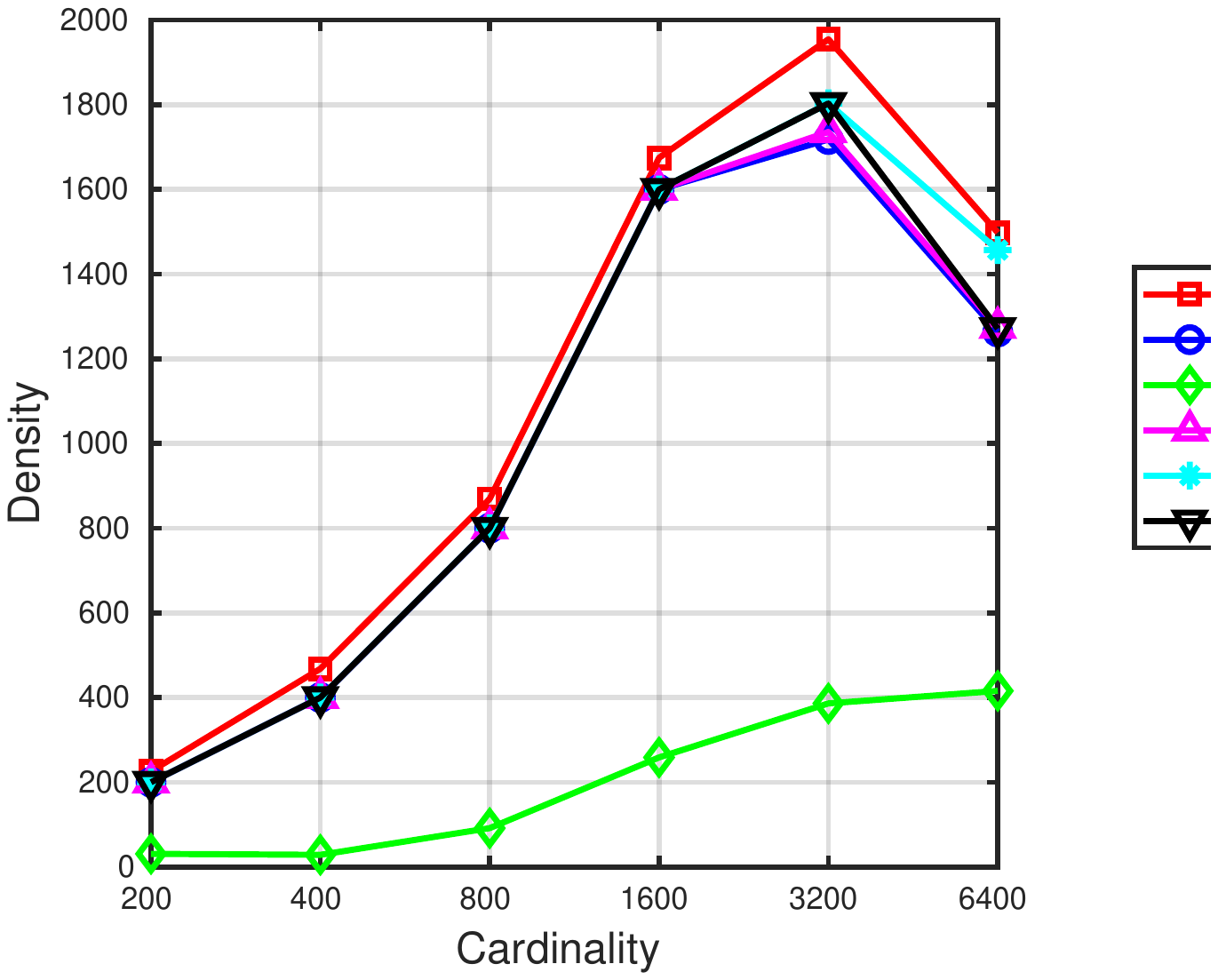}
\end{minipage}}
\vspace{-2ex}
\caption{Experimental results for dense subgraph discovery with $k\in \{ 200,400,800,1600,3200,6400 \}$. The metric is $\x^{\top} \W \x/k$.}
\label{fig:subgraph}
\vspace{-2ex}
\end{figure*}

\paragraph{Graph datasets.} The experiments for dense subgraph discovery are conducted on four large-scale graphs {\it uk-2007-05, dblp-2010, eswiki-2013}, and {\it hollywood-2009} from The Laboratory for Web Algorithmics\footnote{http://law.di.unimi.it/datasets.php}. Table \ref{table:graph} gives a brief description of each graph. For example,  hollywood-2009 contains roughly 1.13 million nodes and 113 million edges.

% \begin{wraptable}{r}{0.56\textwidth}
% \begin{minipage}{0.56\textwidth}
%\vspace{-1ex}
\begin{table}
\caption{The CPU time comparison (seconds) of dense subgraph discovery on the four graphs with $k=1600$.}
\center
\resizebox{0.75\linewidth}{!}{
\begin{tabular}{ccccc}
  \hline
 Method & uk-2007-05 & dblp-2010 & eswiki-2013 & hollywood-2009 \\
  \hline
  DPCD & \textbf{1.77} & \textbf{6.19}	& \textbf{14.17}	& \textbf{21.80}   \\
 LP & 5.02 & 7.27 & 58.96&115.38 \\
 RAVI &2.28&6.40& 104.94 &77.95\\
L2box-ADMM & 35.84 & 74.54 & 335.83&610.72 \\
 MPEC-EPM & 45.21 & 158.11 & 1454.07 &2114.20\\
 MPEC-ADM & 36.76 & 81.64 & 421.35 &720.66\\
\hline
\end{tabular}
}
\label{tab:cpu}
%\vspace{-2ex}
\end{table}
% \end{minipage}
% \end{wraptable}

\paragraph{DPCD vs. state-of-the-art methods.}
The proposed DPCD is compared with the methods   LP \cite{hsieh2015pu}, RAVI \cite{ravi1994heuristic}, L2box-ADMM \cite{wu2018lp}, MPEC-EPM and MPEC-ADM \cite{yuan2016binary}, using the same objective function~\eqref{eq:Problem dense3}.
The cardinality $k$ is in the set $\{200,400,800,1600,3200,6400\}$. For the DPCD method, we run one $5$-neighborhood search ($m=5$ in the neighborhood search setting) after $10$ principal coordinate updates, and set the maximum iteration number for the principal coordinate update part to $100$. $L_1$ and $ L_2$ are updated by Eq. \eqref{eq:derive_L}. We tune the parameters $\alpha_1$ and $\alpha_2$ from $\{0.1,0.2,0.3,\ldots,0.9,1,2,3,\ldots,10\}$ by cross-validation according to datasets.  For other methods, we adopt the implementations and parameters suggested by the authors. The experimental results in Figure \ref{fig:subgraph} are reported in terms of $\x^{\top} \W \x/k$, which is the density of the subgraph with $k$ vertices. We can see that DPCD finds a denser subgraph than all compared methods in each case. Among the other methods, MPEC-EPM consistently outperforms LP in all the experiments.  RAVI generally leads to solutions with low density.
Furthermore, we provide CPU time comparisons for the six methods on the four graphs. From Table \ref{tab:cpu} we can see that, the proposed DPCD achieves the fastest runtime on all graphs. This is due to the fast updates in Algorithm \ref{Alg:1} (the main complexity is to calculate the gradient of the loss function, which can be done quickly). Also, the efficiency of our method becomes more obvious as the graph size increases. LP and RAVI are faster than the other three methods, since MPEC-EPM needs to run the LP procedure multiple times, and L2-box ADMM and MPEC-ADM usually need more iterations to converge.

% \begin{wraptable}{r}{0.5\textwidth}
% \begin{minipage}{0.5\textwidth}
\begin{table}
\caption{Comparison between DPCD with and without neighborhood search.}
\vspace{-1ex}
\center
\resizebox{0.85\linewidth}{!}{
\begin{tabular}{ccccc}
  \hline
 Method (Subgraph)   & uk-2007-05 & dblp-2010 & eswiki-2013 & hollywood-2009 \\
  \hline
  DPCD (loss function)  & \textbf{96.573} & \textbf{45.238}	& \textbf{158.340}	& \textbf{1944.921}   \\
 DPCD-0 (loss function)   & 93.5585 &  44.2512 & 147.815&1866.240 \\
 DPCD (run time)   &1.7689 & 6.1092	& 14.1745	  &21.8038   \\
 DPCD-0 (run time)   &  \textbf{1.1522} &   \textbf{5.5186} &   \textbf{9.9596} &  \textbf{12.8730} \\
\hline
\end{tabular}
}
\label{tab:neighbor}
%\vspace{-3ex}
\end{table}
% \end{minipage}
% \end{wraptable}

\paragraph{With and without the neighborhood search.}
We conduct the comparison of the proposed method with its variant without neighborhood search technique, on the task of dense subgraph discovery with cardinality $1600$ and in terms of loss functions and run time.  DPCD-0 refers to our algorithm without neighborhood search.  From Table \ref{tab:neighbor} we can see that DPCD-0 is slightly faster, while DPCD usually achieves better results:

%\newpage

\subsection{Binary hashing}\label{sec:hashing}

% \begin{wraptable}{r}{0.56\textwidth}
% \begin{minipage}{0.56\textwidth}
\begin{table}
 \caption{Evaluation of DPCD and five general binary optimization methods with the same supervised loss function. The CIFAR-10 dataset is adopted. Results are reported in terms of MAP, Precision@500 and the training time.}
%\label{sample-table}
%\vskip 0.15in
\vspace{-1ex}
\begin{center}
%\begin{sc}
\resizebox{0.99\linewidth}{!}{
\begin{tabular}{cccccccccc}
\hline

\multirow{2}{*}{Method} &  \multicolumn{3}{c}{MAP} & \multicolumn{3}{c}{Precision@500} &  \multicolumn{3}{c}{Training time (seconds)}
%\multicolumn{2}{c}{MAP} & \multicolumn{2}{c}{Precision@5000}
\\

\cline{2-10}

 &  $32$ bits  &  $64$ bits  & $96$ bits  & $32$ bits  &
 $64$ bits  &  $96$ bits  & $32$ bits  & $64$ bits  &$96$ bits
\\
\hline
 DPCD   &   \textbf{0.7019}  &  \textbf{0.7088} &    \textbf{0.7126}  &  \textbf{0.6337} &  \textbf{0.6353} & \textbf{0.6370}    &  \textbf{3.76} &  \textbf{6.01} & \textbf{9.39}
\\
 DCC     &  0.5941  &  0.6193 & 0.6314  &  0.5486 &  0.5766 & 0.5894   &   11.18&   36.03&  158.87
\\
 SGM   & 0.6856  &  0.6986 & 0.7013  &  0.6177&  0.6308  & 0.6360     &   7.87 &   10.83 &   16.30
\\
 LP   &  0.5237  &  0.5468  &  0.5459   &    0.4704  &  0.4972  &  0.4866 & {4.52}&    7.96&   13.64
\\
 L2box-ADMM   & 0.6399  &  0.6724 & 0.6830  &  0.5929&  0.6095  & 0.6162    &   43.07 & 90.10   & 200.94
\\
 MPEC-EPM   &  0.5823  &  0.6253  &      0.6276   &    0.5385&   0.5738 & 0.5790 & 36.36&  124.54&  260.22
\\
\hline
\end{tabular}\label{tab:UB1}
}
%\end{sc}
\end{center}
%\vspace{-3ex}
\end{table}

% \end{minipage}
% \end{wraptable}

Binary hashing aims to encode high-dimensional data points, such as images and videos, into compact binary hash codes such that the similarities between the original data points and hash codes are preserved. This can be used to provide a constant or sub-linear search time and reduce the storage cost dramatically for such data points. The efficiency and effectiveness of binary hashing make it a popular technique in machine learning, information retrieval and computer vision \cite{gui2018fast,hu2018hashing,liu2014discrete,shen2018unsupervised,wang2018survey}.
In a typical binary hashing task,  $\X=(\x_1,\x_2,\ldots,\x_n)^\top  \in \mathbb{R}^{n\times d}$ denotes a matrix of the original data points, where $\x_i \in \mathbb{R}^d$ is the $i$-th sample point, $n$ is the number of samples, and $d$ is the dimension of each sample. In the supervised setting, we let $\Y \in \mathbb{R}^{n\times c}$ be the label matrix, i.e., $\Y_{i,j} = 1$ if $\x_i$ belongs to the $j$-th class and $0$ otherwise. The aim is to map $\X$ to some $\B=(\b_1,\b_2,\ldots,\b_n)^\top \in \{\pm 1\}^{n\times r} $, i.e., map each $x_i$ to a binary code $\b_i \in \{\pm 1\}^r $ for some small integer~$r$ and preserve some similarities between the original data points in $\X$ and hash codes in $\B$.

\paragraph{Image datasets.} Three large-scale image datasets, CIFAR-10\footnote{http://www.cs.toronto.edu/kriz/cifar.html.}, ImageNet\footnote{http://www.image-net.org/.}, %MNIST\footnote{http://yann.lecun.com/exdb/mnist/.}, %CALTECH256\footnote{http://www.vision.caltech.edu/Image$_$Datasets/Caltech256/.},
and NUS-WIDE\footnote{http://lms.comp.nus.edu.sg/research/NUS-WIDE.htm.}, are used in the binary hashing experiments. CIFAR-10 has 60k images, which are divided into 10 classes with 6k images each. We use a 384-dimensional GIST feature vector \cite{oliva2001modeling} to represent each image. 59k images are selected as the training set and the test set contains the remaining 1k images.
A subset of the ImageNet, ILSVRC 2012, contains about 1.2 million images with 1k categories. As in \cite{hu2018hashing,shen2015supervised}, we use 4096-dimensional deep feature vectors for each image, take 127K training images from the 100 largest classes, and 50K images from the validation set as the test set. %4096-dimensional feature vectors of images are extracted by a convolutional neural network (CNN).
NUS-WIDE contains about 270K images with 81 labels. The images may have multiple labels. The 500-dimensional Bag-of-Words features are used here \cite{nus-wide-civr09}. We adopt the 21 most frequent labels with the corresponding 193K images. For each label, 100 images are randomly selected as the test set and the remaining as the training set.

\paragraph{DPCD vs. general binary optimization methods.} To illustrate the efficiency and effectiveness of our algorithm, we compare DPCD with several general state-of-the-art binary optimization methods DCC  \cite{shen2015supervised}, SGM \cite{liu2014discrete}, LP \cite{hsieh2015pu}, L2box-ADMM \cite{wu2018lp}, and MPEC-EPM \cite{yuan2016binary}, on the dataset CIFAR-10. Various loss functions are designed for binary hashing (for examples, see Tables \ref{tab:UB2} and \ref{tab:UB3}). For fair comparison, we adopt the widely used supervised objective function  \cite{gui2018fast,shen2015supervised}
\begin{align}
    f(\B, \W)= \frac12\|\Y-\B\W\|_2^2 + \frac{\delta}2 \|\W\|_2^2
\end{align}
for each method.
Thus the optimization problem becomes:
\begin{align}\label{eq:sdh_loss}
\min_{\mathbf{B},\W}~  \frac12\|\Y-\B\W\|_2^2 + \frac{\delta}2 \|\W\|_2^2 ~~~~~~
  \St ~~~ \mathbf{B}\in   \{ \pm 1\}^{n\times r},
  \W \in \mathbb{R}^{r\times c},
\end{align}
where $\beta$ is a regularization parameter,  and $\W \in \mathbb{R}^{r\times c}$ is the projection matrix (see \cite{shen2015supervised}) which will be learned jointly with $\B$. The whole optimization runs iteratively over $\B$ and $\W$. When $\W$ is fixed, we apply DPCD algorithm to $\B$. The key step is to calculate the gradient of $f(\B, \W)$ as:
\begin{align}\label{eq:gradientB}
    \nabla_{\B}f(\B, \W) = (\B\W-\Y)\W^\top.
\end{align}
Then $L_1, L_2$ can be obtained by Eq. \eqref{eq:derive_L}. After deriving $ S^{k+} $  and $ S^{k-}$, we update $\B$ by Eq. \eqref{eq:updateBk}. When $\B$ is fixed, $\W$ can be updated by
\begin{align}\label{eq: update W}
\W={\arg}\min_{\W^*\in \mathbb{R}^{r\times c}}f(\B, \W^*)=(\B^\top \B+\beta \I_r)^{-1} \B^\top \Y.
\end{align}
Finally, we adopt the linear hash function $h(\X) =\sgn(\X\P)$ to encode $\X$ onto binary codes, where $\P \in \mathbb{R}^{d\times r}$ can be derived by:
\begin{align}\label{linear regression1}
\P ={\arg}\min_{\P^*\in \mathbb{R}^{d\times r}}   \|\X\P^*-\B\|^2=(\X^\top\X)^{-1}\X^\top \B.
  \hspace{12pt}
\end{align}
During the test phase, for a query item, first we use the above linear hash function to derive the hash code, then adopt nearest neighbor search under Hamming distance to find its similar items.
For the DPCD method, we run one $5$-neighborhood search after $10$ principal coordinate updates, and tune the parameters $\alpha_1$ and $\alpha_2$ from $\{0.1,0.2,0.3,\ldots,0.9,1,2,3,\ldots,10\}$ by cross-validation according to datasets and binary code lengths, and set the maximum iteration number for $\B$ to $20$ each time when $\W$ is fixed. We run at most five iterations for updating $\B$ and $\W$ iteratively.  For other methods, we adopt the implementations and parameters suggested by the authors. Ground truths are defined by the label information from the datasets. The experimental results are reported in terms of mean average precision (MAP), Precision@500 (Precision@500 refers to the ratio of the number of retrieved true positive items among 500 nearest neighbors to 500.) and training time efficiency (we ignore the comparison of test time here since the test parts are similar for each algorithm). The experiments are conducted on the   CIFAR-10 dataset. From Table \ref{tab:UB1} we can see that the proposed DPCD outperforms all other methods in MAP and Precision@500. For instance, on CIFAR-10 with 96 bits, DPCD outperforms DCC by 8.1$\%$, and MPEC-EPM by 8.5$\%$ in terms of MAP. It is clear that increasing the number of bits yields better performance for all methods.  Also, the training time for the proposed DPCD method is always faster than other compared methods. For example, DPCD runs 20 times faster than MPEC-EPM on $64$ bits, which verifies that one major advantage of our method is the fast optimization process.

\paragraph{Algorithm complexity analysis.} Now we discuss the complexity of the above supervised DPCD algorithm.
The calculation of the gradient $\nabla_{\B}f(\B^k, \W)$ is obtained by Eq.  \eqref{eq:gradientB}, thus the complexity is  $O(nrc)$. Then $L_1$ and $L_2$ are derived by Eq. \eqref{eq:derive_L}, which has complexity $O(nr)$ since there are  $nr$ additions in Eq. \eqref{eq:derive_L}. Furthermore,  $ S^{k+} $  and $ S^{k-}$  can be determined by running through all $\nabla_{ij} f(\B^k)$ where $1\leq i\leq n, \, 1\leq j\leq r$, which also has complexity $O(nr)$. Similarly, the update for~$\B$ has complexity $O(nr)$. Let $T$ be the maximum
iteration number during the $\B$ updating step (the number of principal coordinate update parts of DPCD). Then, the total time complexity of updating $\B$ is $O(Tnrc)$.
When $\B$ is given, the complexity for updating $\W$ in Eq. \eqref{eq: update W} is $O(ncr+r^3+cr^2)$.  The complexity for calculating the matrix $\P$ in Eq. \eqref{linear regression1} is $O(d^3+nd^2+ndr)$.
Suppose that there are at most $t$ iterations for updating $\B$ and $\W$ iteratively.
Since $r,c \ll n$, the total complexity for supervised DPCD is $O\left(tTncr  +  d^3+nd^2+ndr \right)$, which is a linear function of $n$.

\paragraph{DPCD vs. specific binary hashing methods.} The proposed method is not only fast, but also very versatile, and can thus be used to handle many different loss functions. To demonstrate this, we apply DPCD to several widely used loss functions, and compare the results with the corresponding hashing methods that were designed for each specific loss function. The optimization process is similar to the supervised case.  Table \ref{tab:UB2} illustrates a comparison of DPCD with three unsupervised methods,  SADH-L \cite{shen2018unsupervised}, ARE \cite{hu2018hashing}, and ITQ \cite{gong2013iterative}, on the ImageNet dataset, while Table \ref{tab:UB3} shows a comparison with supervised hashing methods, SDH  \cite{shen2015supervised}, FSDH \cite{gui2018fast}, and FastHash \cite{lin2014fast}, on NUS-WIDE, using their specific loss functions. The proposed DPCD shows increased performance over the original methods and achieves higher MAP and Precision@500 in most cases, especially for the supervised loss functions. In terms of the training time, DPCD outperforms all methods except FSDH. The proposed method can significantly decrease the training time for unsupervised loss functions due to the fast updating of the binary codes $\B$.  Finally, we conclude that DPCD is a fast and effective optimization method for large-scale image retrieval tasks.

\begin{table}[t!]
 \caption{Evaluation of the proposed DPCD method and three unsupervised methods. The ImageNet dataset is adopted. Results are reported in terms of MAP, Precision@500 and training time.}
%\label{sample-table}
%\vskip 0.15in
\begin{center}
%\begin{sc}
\resizebox{0.99\linewidth}{!}{
\begin{tabular}{ccccccccccc}
\hline

\multirow{2}{*}{Method} & \multirow{2}{*}{Loss Function} &  \multicolumn{3}{c}{MAP} & \multicolumn{3}{c}{Precision@500} &  \multicolumn{3}{c}{Training time (s)}
%\multicolumn{2}{c}{MAP} & \multicolumn{2}{c}{Precision@5000}
\\

\cline{3-11}

& &  $32$ bits  &  $64$ bits  & $96$ bits  & $32$ bits  &
 $64$ bits  &  $96$ bits  & $32$ bits  & $64$ bits  &$96$ bits
\\
\hline
 SADH-L  &  \multirow{2}{*}{$\min_{\mathbf{B}}\mathbf{Tr}(\B^\top \mathbf{L}\B)$}  &     0.2448  &  \textbf{0.3104} &   0.3294  &  0.3692  &  \textbf{0.4588}   & 0.4835    &  121.90&  384.01&  738.33
\\
 DPCD   &  &  \textbf{0.2612}  &  0.3085  &  \textbf{0.3441}  &  \textbf{0.3945}  &  0.4508  &   \textbf{0.4992}  &   \textbf{19.03}&   \textbf{33.50}&   \textbf{48.29}
\\
 \hline
 ARE  &  \multirow{2}{*}{$\min_{\mathbf{B}}\|\B\B^\top-r\X\X^\top\|^2$}  &     0.2509 &   0.2997  &  0.3276  &  0.3626 &   0.4478   & 0.4724    &  244.46&  287.19&  332.95
\\
 DPCD   &  &  \textbf{0.2808}  &  \textbf{0.3316}  &  \textbf{0.3607}  &  \textbf{0.3970}  & \textbf{ 0.4856 } &   \textbf{0.5112}  &   \textbf{6.59}&   \textbf{8.37}&   \textbf{11.24}
\\
\hline
 ITQ  &  \multirow{2}{*}{$\min_{\mathbf{B},\R}\|\B-\X\W\R\|^2$}  &     \textbf{0.3209}  &  0.4075  &  0.4388  &  \textbf{0.4269}  &  0.5208  &  0.5581    &  26.01&   32.97&   33.39
\\
 DPCD   &  &  0.3093  &  \textbf{0.4155}  &  \textbf{0.4467} &  0.4148  &  \textbf{0.5283}   & \textbf{0.5654}  &   \textbf{3.35}&   \textbf{7.23}&   \textbf{8.02}
\\
\hline
\end{tabular}\label{tab:UB2}
}
%\end{sc}
\end{center}
%\vspace{-3ex}
\end{table}

\begin{table}[t!]
 \caption{Evaluation of the proposed DPCD method and three supervised methods. The NUS-WIDE dataset is adopted. Results are reported in terms of MAP, Precision@500 and training time.}
%\label{sample-table}
%\vskip 0.15in
\begin{center}
%\begin{sc}
\resizebox{0.99\linewidth}{!}{
\begin{tabular}{ccccccccccc}
\hline

\multirow{2}{*}{Method} & \multirow{2}{*}{Loss Function} &  \multicolumn{3}{c}{MAP} & \multicolumn{3}{c}{Precision@500} &  \multicolumn{3}{c}{Training time (s)}
%\multicolumn{2}{c}{MAP} & \multicolumn{2}{c}{Precision@5000}
\\

\cline{3-11}

& &  $32$ bits  &  $64$ bits  & $96$ bits  & $32$ bits  &
 $64$ bits  &  $96$ bits  & $32$ bits  & $64$ bits  &$96$ bits
\\
\hline
 SDH  &  \multirow{2}{*}{$\min_{\mathbf{B},\W}\|\Y-\B\W\|^2 +{\delta} \|\W\|^2$}  &     0.5716  &   {0.5827} &   0.5920  &  0.6041  &   {0.6056}   & 0.6189    &  24.53&  107.50&  486.95
\\
 DPCD   &  &  \textbf{0.6124}  &  \textbf{0.6230}  &  \textbf{0.6392}  &  \textbf{0.6287}  &  \textbf{0.6357}  &   \textbf{0.6502}  &  \textbf{ 6.91}&  \textbf{16.39}&   \textbf{22.53}
\\
 \hline
 FSDH  &  \multirow{2}{*}{$\min_{\mathbf{B},\W}\|\B-\Y\W\|_2^2 +{\delta} \|\W\|_2^2$}  &     0.5690 &   0.5639  &  0.5676  &  0.5918 &   0.5860   & 0.5988    &  \textbf{2.76}&   \textbf{3.07}&   \textbf{5.95}
\\
 DPCD   &  &  \textbf{0.6159}  &  \textbf{0.6170}  &  \textbf{0.6253}  &  \textbf{0.6260}  & \textbf{ 0.6297 } &   \textbf{0.6335}  &   5.42&   8.96&   10.99
\\
\hline
 FastHash  &  \multirow{2}{*}{$\min_{\mathbf{B}}\|\B\B^\top-r\Y\|^2$}  &     {0.5174}  &  0.5398  &  0.5403  &  {0.5867}  &  0.6014  &  0.6180    &  1381.75&   4668.13&  10605.82
\\
 DPCD   &  &  \textbf{0.5621}  &  \textbf{0.5579}  &  \textbf{0.5767} &  \textbf{0.5991}  &  \textbf{0.6145}   & \textbf{0.6226}  &   \textbf{6.42}&    \textbf{7.88}&   \textbf{11.79}
\\
\hline
\end{tabular}\label{tab:UB3}
}
%\end{sc}
\end{center}
%\vspace{-3ex}
\end{table}

\section{Conclusion and future work}
This paper presents a novel fast optimization method, called Discrete Principal Coordinate Descent (DPCD), to approximately solve binary optimization problems with/without restrictions on the numbers of $1$s and $-1$s in the variables. We derive several theoretical results on the convergence of the proposed algorithm.  Experiments on dense subgraph discovery and binary hashing demonstrate that our method generally outperforms state-of-the-art methods in terms of both solution quality and optimization efficiency.

In the future, we plan to extend our algorithm to a more general framework. Our methods can be seen as a discrete version of the normal gradient descent methods. Since the gradient descent has several useful variants such as momentum, Adam and Adagrad methods \cite{qian1999momentum,ruder2016overview}, it would be possible to combine our methods with these gradient-based methods and propose some discrete versions of them.
%Preliminary experiments show that promising results can also be obtained by this substitution.
We would like to explore more on this direction in the future.

\bibliographystyle{plain}

\end{document}